\documentclass[aap,press]{apt} \volno{58} \partno{}
\usepackage{apt-arxiv}
\usepackage[colorlinks=true,linkcolor=blue,urlcolor=blue,citecolor=blue,anchorcolor=blue]{hyperref}

\paperno{}
\authornames{T.~SHI AND M.~XIA}
\shorttitle{Scale--shape factorization of Poisson--Voronoi cell volume}

\usepackage{amsmath,amssymb,mathtools,bm,graphicx,placeins}

\newcommand{\R}{\mathbb{R}}
\newcommand{\Sph}{\mathbb{S}}
\newcommand{\PP}{\Phi_{\lambda}}
\newcommand{\vol}{\operatorname{vol}}
\newcommand{\Haus}{\mathcal{H}}
\newcommand{\ind}{\mathbf{1}}
\newcommand{\dd}{\mathrm{d}}
\newcommand{\norm}[1]{\left\lVert #1\right\rVert}
\newcommand{\inner}[2]{\left\langle #1,#2\right\rangle}
\newcommand{\bfx}{\bm{x}}
\newcommand{\bfy}{\bm{y}}
\newcommand{\bfu}{\bm{u}}
\newcommand{\bfp}{\bm{p}}
\newcommand{\bfd}{\bm{\delta}}
\newcommand{\E}{\mathbb{E}}
\newcommand{\Var}{\operatorname{Var}}

\usepackage{siunitx}

\begin{document}

\title{Exact Scale--Shape Factorization of the Typical Poisson--Voronoi Cell Volume in Arbitrary Dimension}
\hypersetup{
pdftitle={Exact Scale--Shape Factorization of the Typical Poisson--Voronoi Cell Volume in Arbitrary Dimension}, pdfauthor={Tian Shi and Minghua Xia}, pdfsubject={Palm-typical Poisson--Voronoi cell volume}, pdfkeywords={Poisson point process, Poisson--Voronoi tessellation, Palm distribution, Voronoi flower, scale--shape factorization} }

\authorone{Tian Shi}
\authoronetwo[Sun Yat-sen University]{Minghua Xia}
\addressone{School of Electronics and Information Technology, Sun Yat-sen University, Guangzhou 510006, China}
\emailtwo{xiamingh@mail.sysu.edu.cn}

\begin{abstract}
Despite more than six decades of research, a tractable closed-form distribution for the typical Poisson--Voronoi cell volume remains unknown beyond one dimension. Building on the classical complementary-theorem structure for the Poisson--Voronoi fundamental region, we develop an explicit configuration-space factorization of the Palm-typical cell volume for a tessellation generated by a stationary Poisson point process of intensity \(\lambda>0\) in \(\mathbb R^d\), \(d\geq1\). Conditional on the number \(k\) of effective facets, the Voronoi flower volume is a \(\operatorname{Gamma}(k,\operatorname{rate}=\lambda)\) scale variable independent of the effective-neighbour configuration normalized to have unit flower volume. Mapping this normalized configuration to its cell-to-flower volume ratio \(A_k\) gives the conditional cell-volume representation as the product of the classical Gamma scale and a bounded geometric shape factor. From this representation, we derive exact mixture formulae, transform and moment identities, and criteria characterizing when the conditional cell-volume laws are Gamma. We also distinguish shape-factor variability within facet-number strata from mixing across strata as two sources of departure from a single Gamma law and recover the universal bound \(0<A_k\leq2^{-d}\). For the lower tail, we reduce critical negative moments of the shape factor to inverse-volume integrals over normalized configuration spaces. Under explicit critical-integrability and higher-facet summability assumptions, the density of the intensity-normalized cell volume has leading order \(y^d\) as \(y\downarrow0\). The framework recovers the one-dimensional distribution, admits explicit planar coordinates, and supports numerical evaluation of the mixture representation in dimensions two through four.
\end{abstract}

\keywords{Poisson point process; Poisson--Voronoi tessellation; stochastic geometry; Palm distribution; Voronoi flower; scale--shape factorization; Gamma mixture}

\ams{60D05}{60G55; 60E05}

\section{Introduction}
\label{sec:introduction}

The geometric idea of partitioning space by nearest nuclei has a history that extends well beyond the modern theory of random tessellations. It dates back to Dirichlet's study of quadratic forms in 1850~\cite{Dirichlet1850}, and Voronoi developed it systematically in arbitrary dimensions in 1908~\cite{Voronoi1908}. When the nuclei are generated by a stationary Poisson point process, this deterministic geometric construction becomes the random Poisson--Voronoi tessellation. Gilbert's classical 1962 study~\cite{Gilbert1962} initiated its modern stochastic theory, and Miles~\cite{Miles1970} later advanced it.

Despite more than six decades of research on the stochastic model, no tractable closed-form characterization of the typical Poisson--Voronoi cell-volume distribution is known beyond one dimension. The theory is treated systematically in standard monographs on random tessellations and stochastic geometry~\cite{Moller1994,Okabe2000,SchneiderWeil2008}, and exact results are available for selected geometric functionals, facet counts, moments, and asymptotic regimes~\cite{AlishahiSharifitabar2008,Calka2002,Hilhorst2005,Muche2005, MucheBallani2011}. The dual Poisson--Delaunay tessellation provides a useful contrast: its typical-simplex volumes admit dimension-explicit formulae and have applications to wireless-network modelling~\cite{8358978}. For Poisson--Voronoi cell volumes, however, practical studies continue to rely heavily on Monte Carlo simulation, moment information, and Gamma-type empirical approximations~\cite{HindeMiles1980,Kumar1992,Tanemura2003, FerencNeda2007,Marthinsen1996}.

A recent survey of typical-cell volume distributions~\cite{11655630} emphasizes this analytical asymmetry. Beyond one dimension, available descriptions of the Poisson--Voronoi volume consist primarily of planar integral representations, moment identities, simulation benchmarks, and parametric approximations. In the planar case, Calka derived exact formulae for the principal characteristics of the typical cell by conditioning on its number of sides and integrating over neighbouring-point distances and angular spacings~\cite{Calka2003}. The unconditional area law is then obtained through an infinite mixture over the side count. These formulae are exact, but their planar, configuration-dependent integral form does not yield a tractable unconditional distribution or a framework that extends directly across spatial dimensions.

An important partial structure is nevertheless known. Zuyev established that, conditional on the number of hyperfaces, the volume of the Poisson--Voronoi fundamental region is Gamma distributed and used its containment relation with the cell to obtain a cell-volume tail bound~\cite{Zuyev1992}. M{\o}ller and Zuyev subsequently placed this result in a general complementary-theorem framework and showed that the fundamental-region volume is conditionally independent of the normalized determining configuration \cite{MollerZuyev1996}. What remains unresolved is how the cell volume is obtained from this Gamma scale through the configuration-dependent cell-to-flower ratio and how that ratio produces departures from a single Gamma law.

Let \(d\geq1\) denote the ambient spatial dimension, and let \(\lambda>0\) denote the intensity of the underlying stationary Poisson point process in \(\mathbb R^d\). A finite collection of effective neighbouring nuclei determines a candidate cell \(P\). Its Voronoi flower is
\[
  \Delta(P)=\bigcup_{z\in P}\overline B(z,\norm z),
\]
which coincides with the classical fundamental region and is precisely the exclusion region in which an additional nucleus would alter the candidate cell. Let
\[
  Q=\vol_d(\Delta(P))
  \qquad\text{and}\qquad
  G=\vol_d(P)
\]
denote the flower and candidate-cell volumes, respectively. Under a common positive dilation of the neighbouring nuclei, both \(Q\) and \(G\) are homogeneous of degree \(d\). Consequently, \(Q\) can be used as a radial scale, whereas the cell-to-flower ratio \(G/Q\) depends only on the normalized effective-neighbour configuration.

We derive a finite-dimensional configuration formula for the joint law of the Palm-cell volume and its number of effective facets. Conditional on \(k\) effective facets, where \(k\geq d+1\), the multivariate Slivnyak--Mecke formula contributes the Poisson factor \(\lambda^k e^{-\lambda Q}\)~\cite{LastPenrose2017}. A polar-coordinate decomposition adapted to the homogeneous functional \(Q\) contributes the radial factor \(t^{k-1}\), where \(t=Q\). Their product is the density kernel of a \(\operatorname{Gamma}(k,\operatorname{rate}=\lambda)\) distribution, while the remaining angular measure gives an explicit probability law on normalized effective-neighbour configurations. This construction provides a direct configuration-space realization of the scale--configuration separation.

Mapping the normalized configuration to the cell-to-flower ratio expresses the Palm-cell volume, conditional on the facet number, as the product of the Gamma flower-volume scale and an independent bounded shape factor. This formulation identifies how the exclusion-volume scale transforms into the cell-volume law. Unlike Calka's planar coordinate formulae~\cite{Calka2003}, the resulting framework applies in arbitrary dimension and separates variability of the cell-to-flower ratio within facet-number strata from mixing across those strata. It does not, however, provide a closed-form evaluation of the cell-volume density, because the induced shape-factor laws remain generally implicit.

The factorization yields exact mixture formulae for the distribution and density, transform and moment identities, and a characterization of when the conditional cell-volume law is Gamma with shape equal to the facet number. The normalized configurations are represented by probability measures on finite-dimensional configuration spaces, and their images under the cell-to-flower ratio give the corresponding shape-factor laws. The classical geometric inclusion between the cell and its flower gives the universal bound
\[
  0<A_k\leq 2^{-d}.
\]
Together, these results provide an analytical representation of the cell-volume law and a basis for numerical evaluation and approximation.

The same representation provides a geometric route to the lower tail. For each facet number \(k\), the flower-volume factors cancel at negative moment order \(-k\), reducing the relevant negative moments of the shape factor to inverse-volume integrals over normalized configuration spaces. We derive small-volume and tubular-neighbourhood criteria for the finiteness of these integrals and express the higher-facet summability requirement in geometric terms. The minimal-facet stratum has positive probability and consists of simplices. Under two logically distinct assumptions---critical integrability on the minimal-facet stratum and summability over the higher-facet strata---we establish
\[
  f_{Y_d}(y)
  \sim
  \frac{p_{d,d+1}}{\Gamma(d+1)}
  \E\!\left[A_{d+1}^{-(d+1)}\right]y^d,
  \qquad y\downarrow0,
\]
where \(Y_d=\lambda V_{d,\lambda}\). The leading coefficient therefore separates into the minimal-facet probability and a negative moment of the corresponding shape factor.

The one-dimensional cell-volume distribution is recovered exactly, while the planar case admits explicit coordinates connecting the general construction to classical integral representations. Numerical experiments in dimensions two through four provide diagnostics for the conditional Gamma scale law, scale--configuration independence, the support bound, and Monte Carlo evaluation of the exact mixture representation.

Section~\ref{sec:geometric-measure-setup} develops the Palm geometry, the Voronoi flower, the effective-neighbour configuration formula, and the homogeneous radial normalization. Section~\ref{sec:main-results-consequences} establishes the configuration-space cell-volume factorization and derives its distributional, moment, Gamma-characterization, and conditional lower-tail consequences. Section~\ref{sec:low-dimensional} presents the low-dimensional specializations and numerical evaluation, and Section~\ref{sec:conclusion} concludes.

\section{Geometric and measure-theoretic setup}
\label{sec:geometric-measure-setup}

This section develops the geometric and measure-theoretic framework underlying the exact scale--shape factorization. We first introduce the Palm-typical Voronoi cell, its effective-neighbour configurations, and the associated Voronoi flower. We then establish the exclusion identity and the finite-dimensional configuration representation of the Palm cell. Finally, using the homogeneity of the flower volume, we construct the normalized configuration space representing shape and derive a radial decomposition that separates each admissible configuration into its flower-volume scale and normalized configuration.

\subsection{Palm cell, candidate configurations, and the flower}
\label{Section:II-A}

The scale--shape factorization developed in the next section requires a finite-dimensional description of the Palm-typical Voronoi cell. We begin by introducing the Palm cell, its finite effective-neighbour configurations, and the flower functional that encodes the corresponding Poisson exclusion volume.

Let $\PP$ be a stationary Poisson point process in space $\R^d$ with intensity $\lambda>0$. Slivnyak's theorem identifies the Palm-typical cell with the cell of an added nucleus at the origin \cite{LastPenrose2017}:
\begin{equation*}
  C_0
  =
  \left\{
  z\in\R^d:\norm z\leq\norm{z-x}
  \ \text{for every }x\in\PP
  \right\}.
\end{equation*}
We denote the volume of \(C_0\) by
\begin{equation*}
  V_{d,\lambda}=\vol_d(C_0).
\end{equation*}
For $x\neq0$, put
\begin{equation*}
  H(x)
  =
  \left\{
  z\in\R^d:
  \inner{z}{x}\leq\frac{\norm{x}^2}{2}
  \right\}.
\end{equation*}
Since
\begin{equation*}
  \norm z\leq\norm{z-x}
  \quad\Longleftrightarrow\quad
  \inner{z}{x}\leq\frac{\norm{x}^2}{2},
\end{equation*}
the Palm cell has the half-space representation
\begin{equation*}
  C_0=\bigcap_{x\in\PP}H(x).
\end{equation*}

Although the Palm cell is generated by the entire Poisson process, only finitely many nuclei determine its facets. To isolate this finite-dimensional geometry, consider an ordered $k$-tuple $\bfx=(x_1,\ldots,x_k)$ of distinct nonzero points and define the associated candidate cell by
\begin{equation*}
  P(\bfx)=\bigcap_{i=1}^kH(x_i).
\end{equation*}
Let $\mathcal A_{d,k}$ be the set of tuples for which $P(\bfx)$ is bounded, full-dimensional, and has exactly $k$ facets, with the $i$th constraint generating one facet. Lower-dimensional degeneracies, such as zero-volume facets, coincident supporting hyperplanes, nontransversal vertices, or non-simple changes of face lattice, are isolated below and do not affect the configuration integrals. Write
\begin{equation*}
  G(\bfx)=\vol_d(P(\bfx)).
\end{equation*}
Every bounded full-dimensional polytope in $\R^d$ has at least $d+1$ facets, so $\mathcal A_{d,k}$ is empty for $k<d+1$.

The cell volume \(G(\bfx)\) describes the size of the candidate cell, but it does not by itself determine whether the candidate survives the remaining Poisson points. The relevant object is the associated exclusion region, commonly called the Voronoi flower. For a compact convex set \(L\subset\R^d\) containing the origin, define
\begin{equation*}
  \Delta(L)=\bigcup_{z\in L}\overline B(z,\norm z).
\end{equation*}
When \(L\) is a polytope, this union is already generated by its vertices:
\begin{equation*}
  \Delta(L)
  =
  \bigcup_{v\in\operatorname{vert}(L)}
  \overline B(v,\norm v).
\end{equation*}
Thus, for a Poisson--Voronoi candidate cell, \(\Delta(L)\) coincides with the \emph{fundamental region} used by Zuyev~\cite{Zuyev1992} and by M{\o}ller and Zuyev~\cite{MollerZuyev1996}. For a candidate configuration \(\bfx\), set
\begin{equation*}
  Q(\bfx)=\vol_d(\Delta(P(\bfx))).
\end{equation*}
As shown below, this volume is the natural scale variable because it determines the Poisson exclusion probability for the candidate configuration.

The flower volume admits a support-function representation that we will use repeatedly. Let
\begin{equation*}
  h_L(u)=\sup_{z\in L}\inner{u}{z},
  \qquad u\in\Sph^{d-1}.
\end{equation*}
Since $0\in L$, we have $h_L(u)\geq0$. For $r>0$,
\begin{align*}
  ru\in\Delta(L)
  &\Longleftrightarrow
  \text{there exists }z\in L\text{ such that }\norm{ru-z}^2\leq\norm z^2\\
  &\Longleftrightarrow
  \text{there exists }z\in L\text{ such that }r^2-2r\inner{u}{z}\leq0\\
  &\Longleftrightarrow
  r\leq2h_L(u).
\end{align*}
Thus the flower is a star body whose radial function is determined by the support function of $L$. In the notation of support and radial functions of convex bodies \cite{Schneider2014},
\begin{equation*}
  \rho_{\Delta(L)}(u)=2h_L(u).
\end{equation*}
Polar integration therefore gives
\begin{equation}
  \label{eq:flower-support}
  \vol_d(\Delta(L))
  =
  \frac{2^d}{d}
  \int_{\Sph^{d-1}}h_L(u)^d\,\dd S(u).
\end{equation}
For comparison, the radial function of the candidate cell is
\begin{equation*}
  \rho_{P(\bfx)}(u)
  =
  \min_{i:\inner{u}{x_i}>0}
  \frac{\norm{x_i}^2}{2\inner{u}{x_i}},
\end{equation*}
and hence
\begin{equation*}
  G(\bfx)
  =
  \frac1d
  \int_{\Sph^{d-1}}\rho_{P(\bfx)}(u)^d\,\dd S(u).
\end{equation*}

For illustration, Fig.~\ref{fig:VoronoiFlower} shows the above objects in the planar case. The blue polygon represents the candidate cell \(P\), while the red region represents its flower \(\Delta(P)\). In view of \(\rho_{\Delta(P)}(u)=2h_P(u)\), the flower boundary is the radial expansion determined by the support function of \(P\). In two dimensions, this boundary is composed of circular arcs: the arc between two adjacent effective neighbours \(x_i\) and \(x_{i+1}\) lies on the circumcircle of the triangle with vertices \(0\), \(x_i\), and \(x_{i+1}\). Thus, the figure also illustrates the two geometric quantities entering the cell-to-flower volume ratio, namely \(G=\operatorname{area}(P)\) and \(Q=\operatorname{area}(\Delta(P))\).

\begin{figure}[!t]
	\centering
	\includegraphics[width=0.7\linewidth]{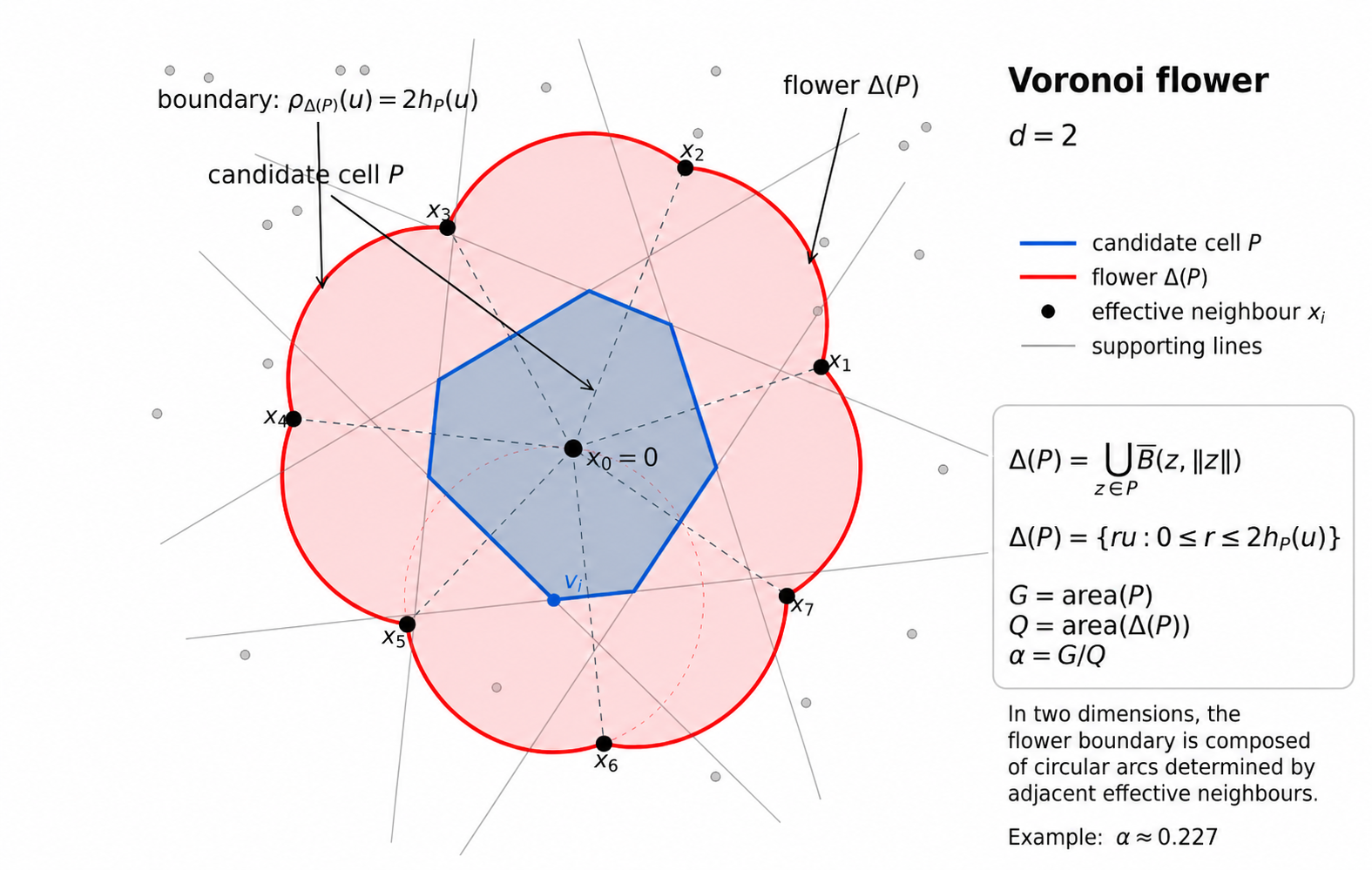}
	\caption{Planar illustration of a Palm candidate cell and its Voronoi flower. The blue polygon is the candidate cell \(P\), the red region is the flower \(\Delta(P)\), and the black points are the effective neighbouring nuclei.}
	\label{fig:VoronoiFlower}
\end{figure}

Before applying measure-theoretic tools, we isolate the regular configuration space on which the combinatorial type of the candidate cell and the associated geometric functionals vary smoothly. The exceptional configurations are lower-dimensional and hence irrelevant for the Lebesgue configuration integrals used below.

\begin{lemma}[Admissible configurations and regular strata]
  \label{lem:admissible-measurable}
Fix \(d\geq1\) and \(k\geq1\), and identify \((\R^d)^k\) with \(\R^{dk}\). The admissible configuration set \(\mathcal A_{d,k}\) is semialgebraic, and hence Borel. Moreover, there exists a Lebesgue-null semialgebraic set \(\mathcal N_{d,k}\subset\mathcal A_{d,k}\) such that
  \[
    \mathcal A_{d,k}^{\rm reg}
    :=
    \mathcal A_{d,k}\setminus\mathcal N_{d,k}
    =
    \bigsqcup_{\ell=1}^{L_{d,k}}\mathcal S_\ell
  \]
is a finite disjoint union of connected smooth semialgebraic strata. The sets \(\mathcal N_{d,k}\), \(\mathcal A_{d,k}^{\rm reg}\), and \(\mathcal S_\ell\) may be chosen invariant under positive scaling. On every stratum \(\mathcal S_\ell\), the face lattice of \(P(\bfx)\) is fixed, every vertex of \(P(\bfx)\) depends smoothly on \(\bfx\), and \(\bfx\mapsto G(\bfx)\) is smooth.
\end{lemma}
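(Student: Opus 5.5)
We write \(\mathcal A_{d,k}\) as a first-order definable subset of \(\R^{dk}\) over the reals and then invoke the Tarski--Seidenberg theorem together with semialgebraic stratification.

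\emph{Step 1: semialgebraicity.} Each condition in the definition of \(\mathcal A_{d,k}\) can be expressed by a first-order formula in the coordinates of \(\bfx\).
\begin{itemize}
\item The points are distinct and nonzero: \(x_i\neq x_j\) and \(x_i\neq0\).
\item Boundedness: there is \(R\) such that every \(z\) with \(\inner{z}{x_i}\leq\norm{x_i}^2/2\) for all \(i\) satisfies \(\norm z^2\leq R^2\).
\item Full dimensionality: there exist \(z\) and \(\varepsilon>0\) such that the ball \(B(z,\varepsilon)\) lies in \(P(\bfx)\).
\item The \(i\)th constraint generates a facet: the set \(\{z\in P(\bfx):\inner{z}{x_i}=\norm{x_i}^2/2\}\) contains \(d\) affinely independent points. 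Affine independence is the nonvanishing of a determinant, hence polynomial.
\item Exactly \(k\) facets: for \(i\neq j\), the facets generated by the \(i\)th and \(j\)th constraints are distinct, which holds because \(x_i\neq x_j\) gives distinct hyperplanes.
\end{itemize}
All quantifiers range over \(\R^d\) or \(\R\). By Tarski--Seidenberg, \(\mathcal A_{d,k}\) is semialgebraic, and every semialgebraic set is Borel.

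\emph{Step 2: the exceptional set.} Define \(\mathcal N_{d,k}\) as the set of \(\bfx\in\mathcal A_{d,k}\) at which \(P(\bfx)\) fails to be simple. Equivalently, some \(d+1\) of the hyperplanes \(\{\inner{z}{x_i}=\norm{x_i}^2/2\}\) meet at a common point of \(P(\bfx)\), or some \(d\) of them have linearly dependent normals \(x_{i_1},\dots,x_{i_d}\). This set is semialgebraic by the same quantifier argument.

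We show it is Lebesgue-null. For a fixed index set \(I\) with \(|I|=d+1\), write \(A_I\) for the matrix with rows \(x_i^{\top}\), \(i\in I\), and \(b_I\) for the vector with entries \(\norm{x_i}^2/2\). Concurrency of the \(d+1\) hyperplanes is the vanishing of the bordered determinant \(\det[A_I\mid b_I]\). This is a nonzero polynomial in \(\bfx\); one can check this by choosing \(x_i=e_i\) for \(i\leq d\) and a generic \(x_{d+1}\). Its zero set therefore has dimension less than \(dk\) and measure zero. The dependent-normals condition is handled the same way with \(\det A_I\). A finite union over index sets is still null.

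Both \(\mathcal A_{d,k}\) and \(\mathcal N_{d,k}\) are invariant under positive scaling, since \(P(s\bfx)=sP(\bfx)\) for \(s>0\) and simplicity is preserved by dilation.

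\emph{Step 3: stratification and smoothness.} On \(\mathcal A_{d,k}^{\rm reg}=\mathcal A_{d,k}\setminus\mathcal N_{d,k}\), every vertex of \(P(\bfx)\) is the unique solution \(v_I=A_I^{-1}b_I\) of a \(d\)-subsystem with \(\det A_I\neq0\). By Cramer's rule \(v_I\) is rational, hence smooth, in \(\bfx\).

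The face lattice is determined by which index sets \(I\) give vertices, namely those with \(\inner{v_I}{x_j}<\norm{x_j}^2/2\) for all \(j\notin I\). Simplicity makes these inequalities strict, and strict polynomial inequalities are open conditions. Hence the combinatorial type is locally constant on \(\mathcal A_{d,k}^{\rm reg}\), and \(\mathcal A_{d,k}^{\rm reg}\) splits into finitely many semialgebraic pieces according to that type.

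Each piece is open in \(\R^{dk}\), hence smooth. A semialgebraic set has finitely many connected components, each of them semialgebraic, so taking these components as the strata \(\mathcal S_\ell\) gives the decomposition. Each component is invariant under positive scaling because the ray \(\{s\bfx:s>0\}\) is connected and lies in the same piece.

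On a stratum \(\mathcal S_\ell\), the volume \(G(\bfx)\) is given by a fixed triangulation of \(P(\bfx)\) into simplices with vertices among the \(v_I\), for instance a cone from the origin over a fixed triangulation of the facets. Each simplex volume is a determinant, so \(G\) is a finite sum of polynomials in the \(v_I\) and is smooth.

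\emph{Main obstacle.} The delicate point is proving that the degeneracy polynomials do not vanish identically on \(\mathcal A_{d,k}\). Exhibiting one nonzero evaluation is not enough by itself: we also need \(\mathcal A_{d,k}\) to have nonempty interior, so that the polynomial's zero set cannot contain it. For this we exhibit an explicit simple configuration of \(k\) normals giving \(k\) facets, for example a small generic perturbation of a simple polytope with \(k\) facets, and argue that the perturbation stays admissible and simple. With that configuration in hand, the remaining arguments are routine semialgebraic bookkeeping.
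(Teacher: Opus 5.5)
Your proof is correct, but it reaches the stratification by a genuinely different route from the paper's.

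\textbf{The paper's route.} After the same Tarski--Seidenberg step, the paper restricts to $\Theta_{d,k}=\mathcal A_{d,k}\cap\Sph^{dk-1}$. It then invokes a semialgebraic Whitney stratification of the closure of $\Theta_{d,k}$, compatible with the determinant signs, the feasibility of the candidate vertices $v_I$, and all vertex--hyperplane incidences. It defines $\mathcal N_{d,k}$ abstractly as the cone over the strata of dimension at most $dk-2$, which is null by a dimension count. The $\mathcal S_\ell$ are the cones over the top-dimensional strata, and compatibility is what freezes the face lattice.

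\textbf{Your route.} You name the degenerate locus explicitly: non-simple configurations, together with those having $d$ linearly dependent normals. You prove nullness with explicit nonzero polynomials. You get local constancy of the combinatorial type from the observation that, once every $\det A_I\neq0$ and $P(\bfx)$ is simple, both ``$v_I$ is a vertex'' and ``$v_I$ is infeasible'' are strict, hence open, conditions.

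\textbf{What each buys.} Your route is more elementary and more informative. It needs no stratification theorem, gives a concrete $\mathcal N_{d,k}$, and produces strata that are open subsets of $\R^{dk}$, so smoothness is automatic. The vertex maps are also globally defined rational functions on $\mathcal A_{d,k}^{\rm reg}$. The paper's route buys generality: it handles any finite family of semialgebraic conditions at once, without having to identify the degeneracies or verify their stability by hand.

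\textbf{Points to tighten.}

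First, your \emph{main obstacle} is illusory. Nullness of $\mathcal N_{d,k}$ only requires containment in the zero set of a polynomial that is nonzero on $\R^{dk}$, which you already have. Whether $\mathcal A_{d,k}$ has interior is irrelevant to the lemma; indeed $\mathcal A_{d,k}=\varnothing$ for $k\leq d$, and $\mathcal A_{1,k}=\varnothing$ for $k\geq3$.

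Second, the claim that each piece is open in $\R^{dk}$ needs $\mathcal A_{d,k}^{\rm reg}$ itself to be open in $\R^{dk}$, which you did not verify. Local constancy of the type only gives relative openness. The missing check is as follows.
\begin{itemize}
\item Boundedness persists because positive spanning is an open condition.
\item The vertex set $\{v_I(\bfx'):I\in\mathcal V\}$ and its incidences persist. Every vertex of a nearby $P(\bfx')$ equals some $v_J(\bfx')$, and $J\notin\mathcal V$ is strictly infeasible. This also yields simplicity, since a point of $P$ on $d+1$ hyperplanes would force a vertex on them.
\item Facet generation persists because affine independence of $d$ vertices on each hyperplane is an open condition.
\end{itemize}

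Third, ``equivalently'' in your definition of $\mathcal N_{d,k}$ is inaccurate: a cube centered at the origin is simple yet has parallel facet normals. Taking the union of the two conditions is harmless, and it is exactly what makes every $v_I$ defined on $\mathcal A_{d,k}^{\rm reg}$.

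Finally, for $G$ you should specify a triangulation rule depending only on the face lattice, such as a pulling triangulation for a fixed vertex order, so that it is valid for every configuration in the stratum. Its simplices then never degenerate, so the absolute values of the determinants are smooth.
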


\begin{proof}
Put \(m=dk\). If \(\mathcal A_{d,k}=\varnothing\), the result is immediate; hence assume below that \(\mathcal A_{d,k}\neq\varnothing\).

For every \(d\)-element subset \(I=\{i_1,\ldots,i_d\}\subset\{1,\ldots,k\}\), define
  \[
    M_I(\bfx)
    =
    (x_{i_1},\ldots,x_{i_d})^T,
    \qquad
    b_I(\bfx)
    =
    \frac12
    \bigl(
      \norm{x_{i_1}}^2,\ldots,\norm{x_{i_d}}^2
    \bigr)^T.
  \]
On the determinant chart
  \[
    \mathcal U_I
    =
    \{\bfx\in\R^m:\det M_I(\bfx)\neq0\},
  \]
the intersection of the \(d\) supporting hyperplanes indexed by \(I\) is
  \[
    v_I(\bfx)=M_I(\bfx)^{-1}b_I(\bfx).
  \]
Thus \(v_I\) is a rational, and therefore smooth, function on \(\mathcal U_I\).

The condition that \(v_I(\bfx)\) is feasible for \(P(\bfx)\) is
  \[
    \inner{v_I(\bfx)}{x_j}
    \leq
    \frac{\norm{x_j}^2}{2},
    \qquad j=1,\ldots,k.
  \]

On each of the two semialgebraic charts \(\{\det M_I>0\}\) and \(\{\det M_I<0\}\), clearing denominators turns these relations into polynomial inequalities. Hence the sets on which \(v_I\) is a feasible vertex, as well as the sets on which it lies on any additional supporting hyperplane, are semialgebraic.

The remaining conditions defining \(\mathcal A_{d,k}\) are semialgebraic as well. Boundedness of \(P(\bfx)\) is equivalent to
\[
  \{z\in\R^d:
    \inner{z}{x_i}\leq0,\ i=1,\ldots,k\}
  =
  \{0\},
\]
or, equivalently, to \(x_1,\ldots,x_k\) positively spanning \(\R^d\). Full dimensionality is equivalent to the existence of a point satisfying all defining inequalities strictly. Finally, the assertion that the \(i\)th constraint defines a facet can be expressed by the existence of \(d\) affinely independent points satisfying the \(i\)th inequality with equality and all remaining inequalities weakly. These are first-order conditions involving finitely many polynomial equalities and inequalities. The Tarski--Seidenberg theorem \cite[Chapter~2]{BochnakCosteRoy1998} therefore shows that their projections onto the configuration variables are semialgebraic. Consequently, \(\mathcal A_{d,k}\) is semialgebraic and hence Borel.

We next construct the regular strata. Since admissibility is invariant under positive scaling, it suffices to work on the Euclidean configuration sphere. Set
\[
  \Theta_{d,k}
  =
  \mathcal A_{d,k}\cap\Sph^{m-1}.
\]
Its closure in \(\Sph^{m-1}\) is a compact semialgebraic set. Consider the finite family of semialgebraic subsets encoding membership in \(\Theta_{d,k}\), the determinant signs of all \(M_I\), the feasibility of all candidate vertices \(v_I\), and all vertex--hyperplane incidences. By the compatible semialgebraic Whitney-stratification theorem \cite{Shiota2005}, \(\overline{\Theta_{d,k}}\) admits a finite smooth semialgebraic stratification compatible with this family. Refining the strata into connected components, if necessary, preserves finiteness because semialgebraic sets have finitely many connected components \cite[Chapter~2]{BochnakCosteRoy1998}.

Compatibility implies that, on each stratum contained in \(\Theta_{d,k}\), the determinant signs, feasible-vertex relations, and vertex--hyperplane incidences are fixed. Consequently, the complete vertex--facet incidence structure, and hence the face lattice of \(P(\theta)\), is fixed. Every vertex is represented there by one of the smooth rational maps \(v_I\).

Let \(\Theta_{d,k}^{\rm reg}\) be the union of the strata of \(\Theta_{d,k}\) having dimension \(m-1\), and let
  \[
    \mathcal N_{d,k}
    =
    \{r\theta:r>0,\ 
      \theta\in\Theta_{d,k}\setminus\Theta_{d,k}^{\rm reg}\}.
  \]
The omitted spherical strata have dimension at most \(m-2\); their conical extensions therefore have dimension at most \(m-1\) and hence zero \(m\)-dimensional Lebesgue measure. Thus \(\mathcal N_{d,k}\) is Lebesgue null. By construction, both \(\mathcal N_{d,k}\) and its complement in \(\mathcal A_{d,k}\) are invariant under positive scaling.

Finally, take the positive conical extension of each connected component of \(\Theta_{d,k}^{\rm reg}\). These extensions are the strata \(\mathcal S_1,\ldots,\mathcal S_{L_{d,k}}\). On each such stratum, the face lattice is fixed, and all vertices depend smoothly on \(\bfx\). A polytope with fixed face lattice may be triangulated by a fixed combinatorial rule. Its volume is then a finite sum of simplex volumes, each given locally by the absolute value of a determinant of smooth vertex coordinates. After a further finite semialgebraic refinement fixing the determinant signs, these simplex volumes, and therefore
  \[
    G(\bfx)=\vol_d(P(\bfx)),
  \]
are smooth on every stratum.
\end{proof}

Lemma~\ref{lem:admissible-measurable} provides the finite-dimensional regularity required below: after removal of a conic null set, the admissible configuration space is partitioned into finitely many smooth semialgebraic strata on which the face lattice is fixed, and both the vertices and cell volume vary smoothly. The radial normalization, however, is determined by the flower volume \(Q\), not by \(G\). We therefore use the support-function representation \eqref{eq:flower-support} to transfer the stratum-wise regularity of the vertices to \(Q\). For the subsequent polar decomposition, measurability and positivity of \(Q\) suffice; no smoothness of the level set \(\{Q=1\}\) is required.

\begin{lemma}[Measurability of the flower-volume map]
  \label{lem:Q-measurable}
The map $Q(\bfx)=\vol_d(\Delta(P(\bfx)))$ is locally Lipschitz on every regular stratum of $\mathcal A_{d,k}^{\rm reg}$. Consequently, $Q$ is Borel measurable on $\mathcal A_{d,k}^{\rm reg}$.
\end{lemma}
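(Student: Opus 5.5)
The plan is to combine the support-function formula \eqref{eq:flower-support} with the stratum-wise smoothness of the vertices from Lemma~\ref{lem:admissible-measurable}. Fix a regular stratum \(\mathcal S_\ell\). On it the face lattice of \(P(\bfx)\) is fixed, so there is a fixed finite index set of vertices \(v_1(\bfx),\ldots,v_N(\bfx)\), each a smooth (rational) map of \(\bfx\). Since \(P(\bfx)\) is a polytope containing the origin, its support function is
\[
  h_{P(\bfx)}(u)=\max_{1\leq j\leq N}\inner{u}{v_j(\bfx)},\qquad u\in\Sph^{d-1},
\]
and \(h_{P(\bfx)}\geq0\). Substituting this into \eqref{eq:flower-support} gives
\[
  Q(\bfx)=\frac{2^d}{d}\int_{\Sph^{d-1}}\Bigl(\max_{j}\inner{u}{v_j(\bfx)}\Bigr)^d\,\dd S(u).
\]

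The key estimate is a pointwise Lipschitz bound in \(\bfx\), uniform in \(u\). Fix \(\bfx_0\in\mathcal S_\ell\) and choose a compact neighbourhood \(K\subset\mathcal S_\ell\) of \(\bfx_0\) on which each \(v_j\) is \(C^1\). Then on \(K\) each \(v_j\) is Lipschitz with some constant \(L_K\) and bounded by some \(R_K\); if needed, shrink \(K\) to a convex coordinate neighbourhood in the stratum's chart so the mean value inequality applies.

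The bound then follows in three steps:
\begin{itemize}
\item Since a maximum of functions is \(1\)-Lipschitz in the sup norm, for \(\bfx,\bfy\in K\) we get
\[
  \bigl|h_{P(\bfx)}(u)-h_{P(\bfy)}(u)\bigr|\leq\max_j\norm{v_j(\bfx)-v_j(\bfy)}\leq L_K\norm{\bfx-\bfy}
\]
for every \(u\).
\item On \([0,R_K]\) the map \(s\mapsto s^d\) is Lipschitz with constant \(dR_K^{d-1}\).
\item Integrating over the sphere yields
\[
  |Q(\bfx)-Q(\bfy)|\leq 2^d R_K^{d-1}L_K\,\Haus^{d-1}(\Sph^{d-1})\,\norm{\bfx-\bfy},
\]
which is local Lipschitz continuity on \(\mathcal S_\ell\).
\end{itemize}

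Measurability follows from this. \(Q\) is continuous on each stratum, and the strata are finitely many disjoint Borel (semialgebraic) sets whose union is \(\mathcal A_{d,k}^{\rm reg}\). Hence \(Q=\sum_\ell Q\,\ind_{\mathcal S_\ell}\) is Borel measurable on \(\mathcal A_{d,k}^{\rm reg}\).

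The main obstacle is justifying the vertex representation uniformly. This means confirming that on a stratum the same labelled list of vertices is valid for all \(\bfx\), with no vertex appearing or merging, so that the maximum runs over a fixed finite family of smooth maps. Lemma~\ref{lem:admissible-measurable} supplies this: the face lattice is fixed and the vertices are the smooth maps \(v_I\). The only remaining care is in the Lipschitz step. Smoothness only gives Lipschitz bounds on convex chart neighbourhoods, so I would work in local charts of the smooth manifold \(\mathcal S_\ell\). I would also note that Lipschitz continuity in the ambient metric follows for nearby points within the stratum; this suffices for the statement.
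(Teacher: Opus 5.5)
Your proof is correct and follows essentially the same route as the paper. Both arguments use the fixed smooth vertex maps on a stratum, write the support function as a maximum of linear forms, derive a Lipschitz bound uniform in $u$, apply the local Lipschitz property of $a\mapsto a^d$ inside \eqref{eq:flower-support}, and deduce Borel measurability from the finite Borel partition into strata. Your chart precaution is harmless but unnecessary: the regular strata are conical extensions of top-dimensional spherical strata, hence open subsets of $\R^{dk}$, so ordinary Euclidean balls already serve as convex neighbourhoods.
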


\begin{proof}
Fix a regular stratum. By Lemma~\ref{lem:admissible-measurable}, the vertices of $P(\bfx)$ are given there by a fixed finite collection of smooth maps $v_1(\bfx),\ldots,v_N(\bfx)$. Hence
  \[
    h_{P(\bfx)}(u)=\max_{1\leq j\leq N}\inner{u}{v_j(\bfx)},
    \qquad u\in\Sph^{d-1}.
  \]
On every compact subset $K_0$ of the stratum, the vertex maps are uniformly bounded and Lipschitz. Thus, for some $L_{K_0}<\infty$,
  \[
    \bigl|h_{P(\bfx)}(u)-h_{P(\bfy)}(u)\bigr|
    \leq L_{K_0}\norm{\bfx-\bfy},
    \qquad \bfx,\bfy\in K_0,\quad u\in\Sph^{d-1}.
  \]
The support functions are also uniformly bounded on $K_0\times\Sph^{d-1}$. Using \eqref{eq:flower-support} and the local Lipschitz bound for $a\mapsto a^d$ on bounded intervals gives
  \[
    |Q(\bfx)-Q(\bfy)|\leq C_{K_0}\norm{\bfx-\bfy},
    \qquad \bfx,\bfy\in K_0,
  \]
for some $C_{K_0}<\infty$. Therefore $Q$ is locally Lipschitz on the stratum. Since the regular configuration space is a finite union of Borel strata, $Q$ is Borel measurable.
\end{proof}

\subsection{Exclusion identity and configuration formula}
\label{Section:II-B}

The preceding subsection introduced the flower as a geometric object associated with a candidate cell. We now show that it also has a direct probabilistic interpretation. Specifically, the flower is exactly the exclusion region whose emptiness determines whether a candidate configuration survives as the Palm cell. This observation converts the geometry of candidate configurations into explicit Poisson probabilities and forms the bridge between Palm geometry and finite-dimensional configuration integrals.

\begin{lemma}[Complete exclusion region]
  \label{lem:exclusion}
Fix $\bfx\in\mathcal A_{d,k}$ and put
  \[
    P=P(\bfx).
  \]
A nonzero additional nucleus $y$ changes $P$ on a set with nonempty interior if and only if
  \begin{equation*}
    y\in\operatorname{int}\Delta(P).
  \end{equation*}
Consequently,
  \begin{equation}
    \label{eq:void-probability}
    \mathbb P
    \{
      \PP\cap\operatorname{int}\Delta(P)=\varnothing
    \}
    =
    \exp[-\lambda Q(\bfx)] .
  \end{equation}
\end{lemma}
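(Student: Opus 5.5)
The plan is to reduce both implications to one elementary observation: \(P\cap H(y)\) differs from \(P\) exactly on \(P\setminus H(y)\). The first step is to show this set has nonempty interior if and only if some point \(z\in\operatorname{int}P\) satisfies \(\inner{z}{y}>\norm y^2/2\), which is the same as \(\norm{z-y}<\norm z\). For the forward direction, \(P\setminus H(y)\) is relatively open in \(P\). If it is nonempty, I take a point \(z_0\) in it and a point \(w\in\operatorname{int}P\), which exists because \(P\) is full-dimensional for \(\bfx\in\mathcal A_{d,k}\). The segment from \(z_0\) toward \(w\) then enters \(\operatorname{int}P\) while still violating the strict inequality. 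For the converse, the set \(\{z\in\operatorname{int}P:\inner zy>\norm y^2/2\}\) is open and nonempty, so it has nonempty interior and lies inside \(P\setminus H(y)\). Hence ``\(y\) changes \(P\) on a set with nonempty interior'' is equivalent to the existence of \(z\in\operatorname{int}P\) with \(\norm{y-z}<\norm z\).

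The second step identifies the set of such \(y\) with \(\operatorname{int}\Delta(P)\). Define
\[
U=\bigcup_{z\in\operatorname{int}P}B(z,\norm z),
\]
which is open and contained in \(\Delta(P)\), so \(U\subseteq\operatorname{int}\Delta(P)\). For the reverse inclusion I would use the radial description \(\rho_{\Delta(P)}(u)=2h_P(u)\) from the support-function computation. Because \(h_P\) is continuous and positive (the origin lies in \(\operatorname{int}P\), as \(P\) is a Palm candidate cell containing a ball around \(0\)), \(\Delta(P)\) is a star body with continuous positive radial function. Its interior is therefore \(\{ru: 0\le r<2h_P(u)\}\).

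Now take \(y=ru\neq0\) with \(r<2h_P(u)\). Choose \(z\in P\) with \(\inner uz=h_P(u)\). Perturb it to \(z'\in\operatorname{int}P\), still with \(2\inner u{z'}>r\); this is possible by density of the interior in the convex body \(P\). The computation already displayed for the flower then gives \(\norm{ru-z'}^2=r^2-2r\inner u{z'}+\norm{z'}^2<\norm{z'}^2\), so \(y\in U\). Hence \(U=\operatorname{int}\Delta(P)\), which proves the equivalence.

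The main obstacle is this interior-versus-closure bookkeeping, specifically confirming \(0\in\operatorname{int}P\). That holds because each \(H(x_i)\) contains the ball of radius \(\norm{x_i}/2\) around \(0\), and there are finitely many \(x_i\).

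The final step is the void probability. \(\Delta(P)\) is a finite union of closed balls, so it is compact, and its boundary is contained in a finite union of spheres, hence Lebesgue null. Therefore \(\vol_d(\operatorname{int}\Delta(P))=Q(\bfx)\). The Poisson void probability for the bounded Borel set \(\operatorname{int}\Delta(P)\) is \(\exp[-\lambda\vol_d(\operatorname{int}\Delta(P))]\), which gives \eqref{eq:void-probability}.
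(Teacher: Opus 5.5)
Your argument is correct and follows essentially the paper's route: the strict inequality $\norm{z-y}<\norm z$, the radial function $2h_P$ of the flower, a continuity and full-dimensionality step that upgrades a removed point to a removed set with nonempty interior, and the Poisson void formula. The only real difference is that you get a null boundary from the vertex representation of $\Delta(P)$ as a finite union of closed balls, whereas the paper uses Lipschitz graphs of $2h_P$ in polar charts.

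One small correction: $0\in\operatorname{int}\Delta(P)$ but $0\notin U$, because $\norm{0-z}<\norm z$ never holds. What you have actually shown is therefore $U=\operatorname{int}\Delta(P)\setminus\{0\}$, which suffices because the lemma concerns only $y\neq0$.
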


\begin{proof}
Adding the nucleus $y$ replaces $P$ by $P\cap H(y)$.  A point $z\in P$ is removed precisely when
  \begin{equation*}
    \norm{z-y}<\norm z .
  \end{equation*}
For fixed $z$, this strict inequality is equivalent to $y\in B(z,\norm z)$.  Therefore, the set of nuclei that remove at least one point of $P$ is
  \begin{equation}
    \label{eq:open-flower}
    \bigcup_{z\in P}B(z,\norm z).
  \end{equation}
Since $0$ is an interior point of $P$, the support function $h_P(u)$ is positive for every $u\in\Sph^{d-1}$.  For $y=ru$ with $r>0$ and $u\in\Sph^{d-1}$,
  \begin{align*}
    y\in\bigcup_{z\in P}B(z,\norm z)
    &\Longleftrightarrow
      \text{there exists }z\in P
      \text{ such that } \norm{ru-z}^2<\norm z^2  \\
    &\Longleftrightarrow
      \text{there exists }z\in P
      \text{ such that } r<2\inner{u}{z}  \\
    &\Longleftrightarrow
      r<2h_P(u).
  \end{align*}
Thus the set in \eqref{eq:open-flower} is exactly $\operatorname{int}\Delta(P)\setminus\{0\}$.

If $y\in\operatorname{int}\Delta(P)$, then the strict inequality $\norm{z-y}<\norm z$ holds for some $z\in P$.  By continuity, it holds in a neighbourhood of $z$, and the intersection of that neighbourhood with the full-dimensional polytope $P$ has nonempty interior.  Hence $P\cap H(y)$ differs from $P$ on a set with nonempty interior.  If $y\notin\operatorname{int}\Delta(P)$, then no point of $P$ satisfies the strict inequality.  The additional half-space can only create boundary contact and therefore cannot remove a subset of $P$ with nonempty interior.

Finally, $\Delta(P)$ has the radial representation
  \begin{equation*}
    \Delta(P)=\{ru:u\in\Sph^{d-1},\ 0\leq r\leq2h_P(u)\}.
  \end{equation*}
Since $P$ is compact, $h_P$ is Lipschitz on $\Sph^{d-1}$; hence $\partial\Delta(P)$ is contained in finitely many Lipschitz graphs in polar charts and has zero $d$-dimensional Lebesgue measure.  Therefore
  \begin{equation*}
    \vol_d(\operatorname{int}\Delta(P))
    =\vol_d(\Delta(P))
    =Q(\bfx).
  \end{equation*}
The Poisson void probability formula gives \eqref{eq:void-probability}.
\end{proof}

\begin{remark}[Flower volume as an exclusion measure]
Lemma~\ref{lem:exclusion} identifies the Voronoi flower, equivalently the fundamental region, as the exclusion region associated with the candidate cell. Specifically,
\[
  Q(\bfx)=\vol_d(\Delta(P(\bfx)))
\]
is the Lebesgue measure of the set in which the presence of an additional Poisson nucleus would alter \(P(\bfx)\). Hence, the absence of all remaining nuclei from its interior is precisely the condition ensuring that the candidate cell remains unchanged, up to null boundary modifications. The corresponding Poisson void probability is therefore
\[
  \exp\{-\lambda Q(\bfx)\}.
\]
Thus, \(Q(\bfx)\) is the natural exclusion-volume scale associated with the candidate configuration.
\end{remark}

Let $\mathcal F_{d-1}(P)$ denote the set of facets of a full-dimensional polytope $P$, and define
\[
N(P):=\#\mathcal F_{d-1}(P).
\]
Write
\[
K=N(C_0)
\]
for the number of effective facets of the Palm cell. The next proposition transforms Palm probabilities into finite-dimensional configuration integrals over admissible neighbour configurations.

The effective neighbours of the Palm cell form a finite subprocess characterized by the absence of all remaining Poisson nuclei from its associated fundamental, or exclusion region. This finite-subprocess viewpoint underlies the general Poisson-process construction of M{\o}ller and Zuyev~\cite[Section~3]{MollerZuyev1996}. The following proposition gives a direct ordered-neighbour version tailored to the joint distribution of the Palm-cell volume and its number of effective facets.

\begin{proposition}[Configuration representation of the Palm cell]
\label{prop:configuration}
For every Borel set \(B\subset(0,\infty)\) and every \(k\ge d+1\),
\begin{equation}
\label{eq:configuration}
\mathbb P
\{
V_{d,\lambda}\in B,\,
K=k
\}
=
\frac{\lambda^k}{k!}
\int_{\mathcal A_{d,k}}
\ind\{G(\bfx)\in B\}
e^{-\lambda Q(\bfx)}
\,\dd\bfx.
\end{equation}
\end{proposition}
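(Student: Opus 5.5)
The plan is to write the event \(\{V_{d,\lambda}\in B,\ K=k\}\) as a sum over ordered \(k\)-tuples of distinct Poisson points, and then evaluate the expectation with the multivariate Slivnyak--Mecke formula. On a full-probability event, \(C_0\) is a bounded full-dimensional polytope. This follows because almost surely the points of \(\PP\) positively span \(\R^d\), there are only finitely many points in every ball, and no degeneracies occur. For such a configuration, \(C_0\) has exactly \(K\) facets, each generated by a unique nucleus \(x\) with \(H(x)\) supporting that facet. Call these the effective neighbours.

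For a finite set \(\{x_1,\ldots,x_k\}\subset\PP\), the following conditions are equivalent: the set is exactly the set of effective neighbours, and \(K=k\); and \(\bfx=(x_1,\ldots,x_k)\in\mathcal A_{d,k}\) with \(\PP\setminus\{x_1,\ldots,x_k\}\) having no point in \(\operatorname{int}\Delta(P(\bfx))\). To prove this equivalence, first use Lemma~\ref{lem:exclusion}. If no remaining nucleus lies in \(\operatorname{int}\Delta(P(\bfx))\), then each extra half-space \(H(y)\) removes no interior part of \(P(\bfx)\). Since \(H(y)\) is closed and convex and contains the interior of \(P(\bfx)\) up to boundary contact, in fact \(P(\bfx)\subset H(y)\). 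Hence \(C_0=P(\bfx)\), and its facets are exactly the \(k\) generated ones. Conversely, suppose \(C_0=P(\bfx)\) with these facets. Then no other nucleus can remove an interior portion, so it lies outside the open flower. The same lemma shows \(C_0=P(\bfx)\), so \(V_{d,\lambda}=G(\bfx)\) on this event. Each unordered set of effective neighbours corresponds to exactly \(k!\) orderings, so
\[
\ind\{V_{d,\lambda}\in B,\,K=k\}
=\frac1{k!}\sum_{(x_1,\ldots,x_k)\in\PP^{k}_{\neq}}
\ind\{\bfx\in\mathcal A_{d,k}\}\ind\{G(\bfx)\in B\}\,
\ind\{(\PP\setminus\{x_1,\ldots,x_k\})\cap\operatorname{int}\Delta(P(\bfx))=\varnothing\}
\]
almost surely.

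Next take expectations and apply the multivariate Mecke formula for the stationary Poisson process. This replaces the sum by \(\lambda^k\int\dd\bfx\) and the reduced process \(\PP\setminus\{x_i\}\) by an independent copy of \(\PP\). The integrand is jointly Borel measurable: \(\mathcal A_{d,k}\) is Borel by Lemma~\ref{lem:admissible-measurable}, \(Q\) is measurable by Lemma~\ref{lem:Q-measurable} up to the null set \(\mathcal N_{d,k}\), and \(G\) is measurable. By the void probability \eqref{eq:void-probability}, the inner expectation equals \(e^{-\lambda Q(\bfx)}\), which yields \eqref{eq:configuration}.

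The main obstacle is the almost-sure geometric bookkeeping in the identity above, specifically the null events. These include a nucleus lying exactly on \(\partial\Delta(P(\bfx))\), which would create a lower-dimensional face or a coincident hyperplane, and ties where two nuclei generate the same facet. I will handle them by a second application of Mecke's formula. For \(k+1\) points, the event that the extra point lies on \(\partial\Delta\) has Lebesgue measure zero, by the Lipschitz boundary argument in Lemma~\ref{lem:exclusion}. Together with \(\mathcal N_{d,k}\) being null, this shows that non-generic configurations contribute zero.
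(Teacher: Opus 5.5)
Your proposal is correct and matches the paper's proof essentially step for step: you write the indicator of \(\{V_{d,\lambda}\in B,K=k\}\) as \(1/k!\) times a sum over ordered \(k\)-tuples of the indicators of admissibility, of \(G(\bfx)\in B\) and of an empty open flower, then apply the multivariate Slivnyak--Mecke formula, and Lemma~\ref{lem:exclusion} supplies the void probability \(e^{-\lambda Q(\bfx)}\). Your extra Mecke argument for nuclei on \(\partial\Delta(P(\bfx))\) is harmless but not needed: a bisector hyperplane determines its nucleus uniquely, so such a nucleus can touch \(P(\bfx)\) only in a lower-dimensional face, two nuclei can never generate the same facet, and the indicator identity therefore holds wherever \(C_0\) is a bounded full-dimensional polytope.
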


\begin{proof}
Let \(\Phi_{\lambda,\neq}^k\) denote the ordered \(k\)-tuples of distinct points of \(\PP\). For a locally finite set \(\varphi\) and an ordered tuple \(\bfx=(x_1,\ldots,x_k)\), define
\begin{equation*}
  I_B(\bfx,\varphi)
  =
  \ind\{\bfx\in\mathcal A_{d,k}\}
  \ind\{G(\bfx)\in B\}
  \ind\{\varphi\cap\operatorname{int}\Delta(P(\bfx))=\varnothing\}.
\end{equation*}

Outside the null set of degenerate Palm cells, the event \(\{K=k\}\) has a unique unordered set of \(k\) effective neighbouring nuclei. Each of these nuclei generates exactly one facet of \(C_0\), and every facet is generated by one of them. Therefore the ordered sum
\begin{equation*}
  \frac1{k!}\sum_{\bfx\in\Phi_{\lambda,\neq}^k}
  I_B(\bfx,\PP\setminus\{x_1,\ldots,x_k\})
\end{equation*}
is exactly the indicator of the event \(\{V_{d,\lambda}\in B,K=k\}\).

Indeed, if \(\{K=k\}\) holds and \(\{x_1,\ldots,x_k\}\) is the effective-neighbour set, then \(P(\bfx)=C_0\) for each ordering of this set, \(G(\bfx)=V_{d,\lambda}\), and Lemma~\ref{lem:exclusion} implies that no remaining point of \(\PP\) lies in \(\operatorname{int}\Delta(P(\bfx))\). The \(k!\) orderings are counted, and the factor \(1/k!\) yields a single contribution. Conversely, if an ordered tuple contributes to the sum, then no point of the remaining process can cut \(P(\bfx)\) on a set with nonempty interior. Hence \(P(\bfx)\) is the Palm cell up to a null boundary modification. Since \(\bfx\in\mathcal A_{d,k}\), this cell has exactly \(k\) effective facets and volume \(G(\bfx)\in B\).

Taking expectations and applying the multivariate Slivnyak--Mecke formula~\cite{LastPenrose2017} gives
\begin{align*}
  \mathbb P\{V_{d,\lambda}\in B,K=k\}
  &=
  \frac1{k!}\E\sum_{\bfx\in\Phi_{\lambda,\neq}^k}
  I_B(\bfx,\PP\setminus\{x_1,\ldots,x_k\})\\
  &=
  \frac{\lambda^k}{k!}
  \int_{(\R^d)^k}
  \E\bigl[I_B(\bfx,\PP)\bigr]\,\dd\bfx.
\end{align*}
For fixed \(\bfx\in\mathcal A_{d,k}\), the remaining expectation is the void probability of \(\operatorname{int}\Delta(P(\bfx))\), namely \(e^{-\lambda Q(\bfx)}\) by Lemma~\ref{lem:exclusion}. Outside \(\mathcal A_{d,k}\), the integrand is zero. This proves \eqref{eq:configuration}.
\end{proof}

The representation \eqref{eq:configuration} is an explicit configuration-space specialization of the classical finite-subprocess/exclusion-region framework. Its additional feature is that it retains the candidate-cell functional \(G(\bfx)\), thereby giving the joint law of cell volume and facet number rather than only that of the determining subprocess or its exclusion volume. The Poisson void probability produces the factor \(e^{-\lambda Q(\bfx)}\), while the remaining integral records the geometry of admissible effective-neighbour configurations. The subsequent radial normalization yields a product measure for the flower-volume scale and normalized configuration. Mapping the latter through \(G/Q\) produces the cell-volume factorization and the associated mixture representation developed below.

\subsection{Scale--shape normalization and radial decomposition}
\label{Section:II-C}

The flower volume is a positive homogeneous functional on the admissible configuration space. This homogeneity separates every admissible configuration into its flower-volume scale and a normalized configuration. We now construct the resulting measure on the unit-flower-volume configuration space and derive the radial decomposition used below.

For $r>0$, the constraint generated by $rx_i$ may be written as
\begin{equation*}
  \inner{z}{x_i}\leq r\frac{\norm{x_i}^2}{2}.
\end{equation*}
It follows that
\begin{equation}
  \label{eq:homogeneity}
  P(r\bfx)=rP(\bfx),\qquad
  G(r\bfx)=r^dG(\bfx),\qquad
  Q(r\bfx)=r^dQ(\bfx).
\end{equation}
Thus, positive scaling preserves the normalized geometry of a configuration while changing only its scale.

The homogeneity of \(Q\) allows the scale of an admissible configuration to be removed by normalizing its flower volume to one. Accordingly, define the normalized configuration space
\begin{equation*}
  \Sigma_{d,k}
  =
  \{\bfu\in\mathcal A_{d,k}^{\rm reg}:Q(\bfu)=1\}.
\end{equation*}
Every positive dilation orbit in \(\mathcal A_{d,k}^{\rm reg}\) intersects \(\Sigma_{d,k}\) in exactly one point; thus, the normalization removes dilation while retaining the configuration's orientation, ordering, and relative geometry. Put $n=dk$, let
\[
  \Theta_{d,k}=\mathcal A_{d,k}^{\rm reg}\cap\Sph^{n-1},
\]
and define the normalization map
\[
  \psi(\theta)=Q(\theta)^{-1/d}\theta,
  \qquad \theta\in\Theta_{d,k}.
\]
The positivity of $Q$ and Lemma~\ref{lem:Q-measurable} make $\psi$ measurable, and $\psi(\theta)\in\Sigma_{d,k}$.  We define the normalized-configuration measure as the push-forward
\begin{equation}
  \label{eq:mu-def}
  \mu_{d,k}
  =
  \psi_\#\left(
    \frac1d Q(\theta)^{-k}
    \Haus^{n-1}\big|_{\Theta_{d,k}}
  \right).
\end{equation}
This definition uses only ordinary Euclidean polar coordinates; in particular, it does not require $Q$ to be differentiable on the fixed level set $Q=1$.

Every \(\bfx\in\mathcal A_{d,k}^{\rm reg}\) lies on exactly one positive dilation orbit and admits the unique representation
\begin{equation*}
  t=Q(\bfx),\qquad
  \bfu=t^{-1/d}\bfx\in\Sigma_{d,k},\qquad
  \bfx=t^{1/d}\bfu.
\end{equation*}
The cell-to-flower volume ratio is invariant under positive dilation and therefore depends only on the normalized configuration. Define
\begin{equation}
  \label{eq:alpha-def}
  \alpha(\bfu)=G(\bfu)
  =
  \frac{\vol_d(P(\bfu))}
       {\vol_d(\Delta(P(\bfu)))}.
\end{equation}
Thus, \(\bfu\) represents the normalized effective-neighbour configuration, whereas \(\alpha(\bfu)\) is the scalar cell-to-flower volume ratio determined by that configuration. We refer to \(\alpha(\bfu)\), and later to its random counterpart \(A_k\), as the \emph{shape factor}. The equality on the right of \eqref{eq:alpha-def} follows because \(Q(\bfu)=1\) on \(\Sigma_{d,k}\). Homogeneity gives
\begin{equation}
  \label{eq:G-scale-shape}
  G(t^{1/d}\bfu)=t\alpha(\bfu).
\end{equation}

The following identity separates integration over the admissible configuration space into integration over radial scale and normalized configuration.
\begin{lemma}[Scale--shape radial decomposition]
  \label{lem:coarea}
For every nonnegative measurable function $h$ on $\mathcal A_{d,k}$,
  \begin{equation}
    \label{eq:coarea}
    \int_{\mathcal A_{d,k}}h(\bfx)\,\dd\bfx
    =
    \int_{\Sigma_{d,k}}\int_0^\infty
    h(t^{1/d}\bfu)t^{k-1}\,\dd t\,\mu_{d,k}(\dd\bfu).
  \end{equation}
\end{lemma}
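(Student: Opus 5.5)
We start from ordinary Euclidean polar coordinates on \(\R^n\), \(n=dk\). By Lemma~\ref{lem:admissible-measurable}, \(\mathcal N_{d,k}\) is Lebesgue null, so the left-hand side of \eqref{eq:coarea} equals the integral over the cone \(\mathcal A_{d,k}^{\rm reg}=\{s\theta: s>0,\ \theta\in\Theta_{d,k}\}\); this cone description uses the scaling invariance of \(\mathcal A_{d,k}^{\rm reg}\). Polar coordinates then give
\[
  \int_{\mathcal A_{d,k}}h(\bfx)\,\dd\bfx
  =
  \int_{\Theta_{d,k}}\int_0^\infty h(s\theta)s^{n-1}\,\dd s\,\Haus^{n-1}(\dd\theta).
\]
All integrands are nonnegative and measurable, since \(Q\) is Borel by Lemma~\ref{lem:Q-measurable} and \(\Theta_{d,k}\) is Borel. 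Tonelli's theorem therefore justifies every interchange below, including infinite values.

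Next, we reparametrize each ray by the flower volume. For fixed \(\theta\), homogeneity \eqref{eq:homogeneity} gives \(Q(s\theta)=s^dQ(\theta)\). We substitute \(t=s^dQ(\theta)\), so that
\[
  s=t^{1/d}Q(\theta)^{-1/d},\qquad s\theta=t^{1/d}\psi(\theta),\qquad \dd s=\tfrac1d\,t^{1/d-1}Q(\theta)^{-1/d}\,\dd t.
\]
Then
\[
  s^{n-1}\,\dd s=\tfrac1d\,s^{n}t^{-1}\,\dd t=\tfrac1d\,t^{k-1}Q(\theta)^{-k}\,\dd t,
\]
using \(s^n=s^{dk}=t^kQ(\theta)^{-k}\). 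The inner integral becomes
\[
  \frac1d Q(\theta)^{-k}\int_0^\infty h\bigl(t^{1/d}\psi(\theta)\bigr)t^{k-1}\,\dd t .
\]
Positivity of \(Q\) on admissible configurations makes this substitution a valid diffeomorphism of \((0,\infty)\).

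Finally, we set \(F(\bfu)=\int_0^\infty h(t^{1/d}\bfu)t^{k-1}\,\dd t\) for \(\bfu\in\Sigma_{d,k}\). This \(F\) is a nonnegative measurable function, by Tonelli applied to the jointly measurable map \((t,\bfu)\mapsto h(t^{1/d}\bfu)\). The outer integral is \(\int_{\Theta_{d,k}}F(\psi(\theta))\,\tfrac1dQ(\theta)^{-k}\,\Haus^{n-1}(\dd\theta)\). By the change-of-variables rule for push-forward measures and definition \eqref{eq:mu-def}, this equals \(\int_{\Sigma_{d,k}}F\,\dd\mu_{d,k}\), which is \eqref{eq:coarea}.

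The only delicate point is the bookkeeping of measurability and null sets, and it is mild. We must check that \(\psi\) is a measurable bijection \(\Theta_{d,k}\to\Sigma_{d,k}\), with inverse \(\bfu\mapsto\bfu/\norm{\bfu}\), so that the push-forward is well defined. We must also check that discarding \(\mathcal N_{d,k}\) does not change the integral over \(\mathcal A_{d,k}\). Both facts follow directly from Lemmas~\ref{lem:admissible-measurable} and~\ref{lem:Q-measurable}. No differentiability of \(Q\) is needed, because the Jacobian computation takes place only along rays, where \(Q\) is an explicit power of \(s\).
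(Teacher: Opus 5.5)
Your proposal is correct and follows the paper's own proof essentially step for step. The shared steps are polar coordinates on the cone $\mathcal A_{d,k}^{\rm reg}$ after discarding the null set, the ray-wise substitution $t=s^dQ(\theta)$ with Jacobian $\frac1d t^{k-1}Q(\theta)^{-k}$, and the push-forward definition of $\mu_{d,k}$, all justified by Tonelli. Your extra measurability bookkeeping is sound, although the bijectivity of $\psi$ is not actually needed, because the push-forward change-of-variables formula holds for any measurable map.
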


\begin{proof}
The admissible set is a cone: if $\bfx\in\mathcal A_{d,k}^{\rm reg}$ and $r>0$, then $r\bfx\in\mathcal A_{d,k}^{\rm reg}$.  The omitted exceptional set has Lebesgue measure zero.  Euclidean polar coordinates in $\R^n$ therefore give
  \begin{equation*}
    \int_{\mathcal A_{d,k}}h(\bfx)\,\dd\bfx
    =\int_{\Theta_{d,k}}\int_0^\infty
      h(r\theta)r^{n-1}\,\dd r\,\dd\Haus^{n-1}(\theta).
  \end{equation*}
For fixed $\theta$, set $t=r^dQ(\theta)$.  Then
  \[
    r\theta=t^{1/d}\psi(\theta),\qquad
    r^{n-1}\,\dd r
    =\frac1d t^{n/d-1}Q(\theta)^{-n/d}\,\dd t
    =\frac1d t^{k-1}Q(\theta)^{-k}\,\dd t.
  \]
Substitution followed by the push-forward definition \eqref{eq:mu-def} proves \eqref{eq:coarea}.  Tonelli's theorem \cite[Chapter~2]{Folland1999} applies because $h$ is nonnegative.
\end{proof}

\begin{remark}[Scale--configuration interpretation of the radial decomposition]
The decomposition \eqref{eq:coarea} is a polar-coordinate formula adapted to the homogeneous functional \(Q\). The flower volume \(t=Q(\bfx)\) is the radial scale, while \(\mu_{d,k}\) describes configurations normalized to have unit flower volume. The factor \(t^{k-1}\) is the radial Jacobian associated with this normalization. At this stage, the decomposition is purely geometric and measure-theoretic. The Gamma law and the probabilistic independence of scale and normalized configuration arise after it is combined with the Poisson factor \(\lambda^k e^{-\lambda t}\).
\end{remark}

The preceding construction equips the normalized configuration space with a finite measure whenever the following quantity is finite. Set
\begin{equation}
  \label{eq:p-def}
  p_{d,k}=\frac1k\mu_{d,k}(\Sigma_{d,k}).
\end{equation}
The factorization theorem below identifies this quantity with the facet-number probability, thereby justifying the notation. When $p_{d,k}>0$, define the probability measure
\begin{equation*}
  \nu_{d,k}(\dd\bfu)
  =
  \frac{\mu_{d,k}(\dd\bfu)}{kp_{d,k}}
\end{equation*}
and its push-forward under $\alpha$ by
\begin{equation*}
  \eta_{d,k}=\alpha_\#\nu_{d,k}.
\end{equation*}

If \(p_{d,k}=0\), neither a normalized-configuration law nor its induced shape-factor law is needed; later formulae either condition on a positive-probability stratum or multiply the corresponding term by $p_{d,k}$.

The constructions above complete the geometric preparation for the configuration-space factorization. Section~\ref{sec:main-results-consequences} combines the flower-volume scale with the normalized effective-neighbour configuration and maps the latter through the cell-to-flower ratio, yielding a Gamma-scale product representation of the Palm-cell volume.

\section{Main results and consequences}
\label{sec:main-results-consequences}

The geometric preparation in Section~\ref{sec:geometric-measure-setup} now yields the main probabilistic factorization. Conditional on the number of effective facets, the flower volume is a Gamma scale variable independent of the normalized effective-neighbour configuration.

For $k$ with $p_{d,k}>0$, define a probability measure on ordered admissible configurations by
\begin{equation}
  \label{eq:conditional-configuration-law}
  \mathsf P_{d,k,\lambda}(\dd\bfx)
  =
  \frac{\lambda^k}{k!p_{d,k}}
  e^{-\lambda Q(\bfx)}
  \ind\{\bfx\in\mathcal A_{d,k}\}\,\dd\bfx.
\end{equation}
Its normalization follows from Proposition~\ref{prop:configuration} with $B=(0,\infty)$. Equivalently, it is the law obtained by giving the effective neighbours of the Palm cell an independent uniform ordering, conditional on $K=k$. All geometric maps used below are invariant under permutations of this ordering.

The Gamma law and scale--configuration independence in the following theorem recover the Poisson--Voronoi specialization of the complementary theorem of M{\o}ller and Zuyev~\cite[Theorem~3 and Example~5(ii)]{MollerZuyev1996}. Our formulation additionally identifies the normalized-configuration measure and maps it through the cell-to-flower ratio to obtain the Palm-cell volume factorization.

\begin{theorem}[Configuration-space factorization of the Palm-cell volume]
\label{thm:factorization}
Let \(K=N(C_0)\). Then the quantities defined in \eqref{eq:p-def} are the facet-number probabilities:
\begin{equation*}
  \mathbb P(K=k)=p_{d,k},
  \qquad k\geq d+1.
\end{equation*}

For every \(k\) with \(p_{d,k}>0\), let \(\bfx\) have distribution \(\mathsf P_{d,k,\lambda}\), and define
\begin{equation*}
  T_k=Q(\bfx),
  \qquad
  U_k=Q(\bfx)^{-1/d}\bfx\in\Sigma_{d,k}.
\end{equation*}
Then their joint distribution is the product measure
\begin{equation}
  \label{eq:theorem-product-measure}
  \mathsf P_{d,k,\lambda}
  \{T_k\in\dd t,U_k\in\dd\bfu\}
  =
  \frac{\lambda^k}{\Gamma(k)}
  t^{k-1}e^{-\lambda t}\,\dd t\,
  \nu_{d,k}(\dd\bfu).
\end{equation}
In particular,
\begin{equation*}
  T_k\sim
  \operatorname{Gamma}\bigl(k,\operatorname{rate}=\lambda\bigr),
  \qquad
  U_k\sim\nu_{d,k},
  \qquad
  T_k\perp U_k.
\end{equation*}

Writing
\[
A_k=\alpha(U_k)=\frac{G(U_k)}{Q(U_k)}=G(U_k),
\]
the conditional Palm-cell volume satisfies
\begin{equation}
  \label{eq:V-factorization}
  V_{d,\lambda}\mid\{K=k\}
  \overset{\mathrm d}=T_kA_k,
\end{equation}
where
\[
A_k\sim\eta_{d,k},
\qquad
T_k\perp A_k.
\]

Equivalently, for the intensity-normalized cell volume
\[
Y_d=\lambda V_{d,\lambda},
\]
one may construct \(K\) with \(\mathbb P(K=k)=p_{d,k}\) and, conditionally on \(K=k\), independent random variables
\[
A_k\sim\eta_{d,k},
\qquad
Z_k\sim\operatorname{Gamma}(k,\operatorname{rate}=1),
\]
such that
\begin{equation}
  \label{eq:Y-factorization}
  Y_d\overset{\mathrm d}=A_KZ_K.
\end{equation}
Values assigned on strata with \(p_{d,k}=0\) are immaterial.
\end{theorem}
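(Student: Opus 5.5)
The plan is to combine Proposition~\ref{prop:configuration} with the radial decomposition of Lemma~\ref{lem:coarea}, and read off every claim from the resulting product structure.

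\emph{Step 1: the facet-number probabilities.} Take \(B=(0,\infty)\) in \eqref{eq:configuration}, so that \(\mathbb P(K=k)=\frac{\lambda^k}{k!}\int_{\mathcal A_{d,k}}e^{-\lambda Q(\bfx)}\,\dd\bfx\). Apply \eqref{eq:coarea} with \(h(\bfx)=e^{-\lambda Q(\bfx)}\). The homogeneity \eqref{eq:homogeneity} together with \(Q(\bfu)=1\) on \(\Sigma_{d,k}\) gives \(Q(t^{1/d}\bfu)=t\), so
\[
  \mathbb P(K=k)=\frac{\lambda^k}{k!}\,\mu_{d,k}(\Sigma_{d,k})\int_0^\infty t^{k-1}e^{-\lambda t}\,\dd t
  =\frac{\Gamma(k)}{k!}\,\mu_{d,k}(\Sigma_{d,k})
  =\frac1k\mu_{d,k}(\Sigma_{d,k}),
\]
which equals \(p_{d,k}\) by \eqref{eq:p-def}. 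Since this is a probability, it also shows that \(\mu_{d,k}\) is finite, so \(\nu_{d,k}\) is well defined whenever \(p_{d,k}>0\).

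\emph{Step 2: the product law.} For nonnegative measurable \(g\) on \((0,\infty)\times\Sigma_{d,k}\), write \(\E[g(T_k,U_k)]\) under \(\mathsf P_{d,k,\lambda}\) as
\[
  \frac{\lambda^k}{k!\,p_{d,k}}\int_{\mathcal A_{d,k}}g\bigl(Q(\bfx),Q(\bfx)^{-1/d}\bfx\bigr)e^{-\lambda Q(\bfx)}\,\dd\bfx .
\]
The exceptional set \(\mathcal N_{d,k}\) is Lebesgue-null, so it does not contribute to this integral. Lemma~\ref{lem:coarea} then transforms it into
\[
  \frac{\lambda^k}{k!\,p_{d,k}}\int_{\Sigma_{d,k}}\int_0^\infty g(t,\bfu)\,t^{k-1}e^{-\lambda t}\,\dd t\,\mu_{d,k}(\dd\bfu),
\]
because \(Q(t^{1/d}\bfu)=t\) and \(Q(t^{1/d}\bfu)^{-1/d}t^{1/d}\bfu=\bfu\). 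Substituting \(\mu_{d,k}=k\,p_{d,k}\,\nu_{d,k}\) and \(k!/k=\Gamma(k)\) yields exactly \eqref{eq:theorem-product-measure}. The Gamma marginal, the marginal \(\nu_{d,k}\), and independence all follow from the product form.

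\emph{Step 3: the volume factorization.} By \eqref{eq:G-scale-shape}, \(G(\bfx)=G(T_k^{1/d}U_k)=T_k\alpha(U_k)=T_kA_k\). Proposition~\ref{prop:configuration} with general \(B\), divided by \(p_{d,k}\), says that \(\mathbb P(V_{d,\lambda}\in B\mid K=k)=\mathsf P_{d,k,\lambda}\{G(\bfx)\in B\}\). Hence \eqref{eq:V-factorization} holds. Moreover \(A_k\sim\alpha_\#\nu_{d,k}=\eta_{d,k}\), and \(A_k\) is independent of \(T_k\) because it is a measurable function of \(U_k\) alone.

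\emph{Step 4: the intensity-normalized form.} Under \(\operatorname{Gamma}(k,\operatorname{rate}=\lambda)\), \(\lambda T_k\sim\operatorname{Gamma}(k,\operatorname{rate}=1)\). So \(\lambda V_{d,\lambda}\) conditioned on \(K=k\) has the law of \(Z_kA_k\) with \(Z_k\) and \(A_k\) independent. Mixing over \(k\) with weights \(p_{d,k}\) gives \eqref{eq:Y-factorization}. This requires \(\sum_k p_{d,k}=1\), which holds because the Palm cell is almost surely a bounded polytope with finitely many facets, at least \(d+1\) of them.

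\emph{Main obstacle.} The only delicate point is measurability: \(g(Q(\bfx),Q(\bfx)^{-1/d}\bfx)\) must be Borel and \(\mathcal N_{d,k}\) must be negligible. This is supplied by Lemmas~\ref{lem:admissible-measurable} and~\ref{lem:Q-measurable}. Beyond that, the argument is a direct computation.
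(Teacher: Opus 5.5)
Your proposal is correct and follows the paper's proof essentially step for step. You substitute Lemma~\ref{lem:coarea} into Proposition~\ref{prop:configuration}, take \(B=(0,\infty)\) to identify \(p_{d,k}\) and the finiteness of \(\mu_{d,k}\), read off the product law under \(\mathsf P_{d,k,\lambda}\), push forward through \(\alpha\), and rescale with \(Z_k=\lambda T_k\). Using general nonnegative test functions \(g(T_k,U_k)\) instead of rectangles \(D\times E\), and checking explicitly that \(\sum_k p_{d,k}=1\) (which the mixture representation of \(Y_d\) needs and the paper leaves implicit), are minor refinements rather than a different route.
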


\begin{proof}
Substitute Lemma~\ref{lem:coarea} into Proposition~\ref{prop:configuration}. For every Borel set \(B\subset(0,\infty)\),
\begin{align}
  \mathbb P\{V_{d,\lambda}\in B,K=k\}
  &=
  \frac{\lambda^k}{k!}
  \int_{\Sigma_{d,k}}\int_0^\infty
  \ind\{t\alpha(\bfu)\in B\}
  e^{-\lambda t}t^{k-1}\,\dd t\,
  \mu_{d,k}(\dd\bfu).
  \label{eq:joint-scale-shape}
\end{align}

Taking \(B=(0,\infty)\) gives
\begin{align*}
  \mathbb P(K=k)
  &=
  \frac{\lambda^k}{k!}\mu_{d,k}(\Sigma_{d,k})
  \int_0^\infty e^{-\lambda t}t^{k-1}\,\dd t\\
  &=
  \frac{\lambda^k}{k!}\mu_{d,k}(\Sigma_{d,k})
  \frac{\Gamma(k)}{\lambda^k}\\
  &=
  \frac1k\mu_{d,k}(\Sigma_{d,k})
  =
  p_{d,k}.
\end{align*}
In particular, \(\mu_{d,k}(\Sigma_{d,k})<\infty\), and the normalization defining \(\nu_{d,k}\) is legitimate whenever \(p_{d,k}>0\).

Now assume \(p_{d,k}>0\). For Borel sets \(D\subset(0,\infty)\) and \(E\subset\Sigma_{d,k}\), the definition of \(\mathsf P_{d,k,\lambda}\), together with Lemma~\ref{lem:coarea}, gives
\begin{align*}
  \mathsf P_{d,k,\lambda}\{T_k\in D,U_k\in E\}
  &=
  \frac{\lambda^k}{k!p_{d,k}}
  \int_E\int_D
  e^{-\lambda t}t^{k-1}\,\dd t\,
  \mu_{d,k}(\dd\bfu).
\end{align*}
Using \(\mu_{d,k}=kp_{d,k}\nu_{d,k}\) and \(k!=k\Gamma(k)\), we obtain
\begin{equation}
  \label{eq:conditional-product-measure}
  \mathsf P_{d,k,\lambda}
  \{T_k\in\dd t,U_k\in\dd\bfu\}
  =
  \frac{\lambda^k}{\Gamma(k)}
  t^{k-1}e^{-\lambda t}\,\dd t\,
  \nu_{d,k}(\dd\bfu).
\end{equation}
Thus \(T_k\) has the \(\operatorname{Gamma}(k,\operatorname{rate}=\lambda)\) distribution and is independent of \(U_k\), whose distribution is \(\nu_{d,k}\).

Since \(A_k=\alpha(U_k)\), it follows that \(A_k\sim\eta_{d,k}\) and \(A_k\) is independent of \(T_k\). Moreover,
\[
  G(\bfx)
  =
  G(t^{1/d}\bfu)
  =
  t\alpha(\bfu),
\]
which proves \eqref{eq:V-factorization}. Finally, multiplying \eqref{eq:V-factorization} by \(\lambda\) and setting \(Z_k=\lambda T_k\) gives
\[
  Z_k\sim\operatorname{Gamma}(k,\operatorname{rate}=1),
\]
and hence \eqref{eq:Y-factorization}.
\end{proof}

Theorem~\ref{thm:factorization} has two layers. Its scale--configuration component gives an explicit configuration-space realization of the classical complementary-theorem result: conditional on \(K=k\), the flower volume \(T_k=Q\) is Gamma distributed and is independent of the normalized effective-neighbour configuration \(U_k\). The additional cell-volume component applies the geometric map
\[
\alpha(U_k)=\frac{G(U_k)}{Q(U_k)}
\]
to that normalized configuration. Consequently, the conditional Palm-cell volume is the product of the classical Gamma exclusion-volume scale and the induced cell-to-flower shape factor. This product representation is the starting point for the cell-volume mixture, transform, moment, and lower-tail results developed below.

The only scalar geometric information entering this product is the shape factor
\[
  A_k=\alpha(U_k),
\]
namely the cell-to-flower volume ratio determined by the normalized configuration. The following bound shows that this random variable has a universal compact upper support, independent of both the intensity and the facet number.

The inclusion \(2P\subseteq\Delta(P)\), used by Zuyev to derive a cell-volume tail bound~\cite[Section~4]{Zuyev1992}, gives the following uniform bound. We provide a direct support-function proof in the present notation.

\begin{lemma}[Cell-to-flower volume bound]
  \label{lem:alpha-bound}
For every admissible candidate cell \(P\),
  \begin{equation}
    \label{eq:alpha-bound}
    0<
    \frac{\vol_d(P)}{\vol_d(\Delta(P))}
    \leq 2^{-d}.
  \end{equation}
Consequently, every nontrivial shape-factor law \(\eta_{d,k}\) is supported on \((0,2^{-d}]\). The constant \(2^{-d}\) is universal and is sharp in the larger class of convex bodies, as shown by centered Euclidean balls.
\end{lemma}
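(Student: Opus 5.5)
The plan is to compare the radial functions of \(P\) and \(\Delta(P)\) and then integrate in polar coordinates. Admissible candidate cells contain the origin in their interior, as noted in the proof of Lemma~\ref{lem:exclusion}. The key pointwise inequality is
\[
  \rho_P(u)\leq h_P(u),\qquad u\in\Sph^{d-1}.
\]
It holds because the point \(\rho_P(u)u\) lies in \(P\), so
\[
  h_P(u)\geq\inner{u}{\rho_P(u)u}=\rho_P(u).
\]
Since \(\rho_{\Delta(P)}(u)=2h_P(u)\), this gives \(2\rho_P(u)\leq\rho_{\Delta(P)}(u)\), which is the inclusion \(2P\subseteq\Delta(P)\). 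Integrating \(\rho^d/d\) over the sphere, using the polar formula for \(G\) and \eqref{eq:flower-support}, yields \(2^d\vol_d(P)\leq\vol_d(\Delta(P))\). Equivalently, one may invoke monotonicity and \(d\)-homogeneity of volume directly on the inclusion.

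For the strict lower bound, full dimensionality of \(P\) gives \(\vol_d(P)>0\). Compactness gives \(\vol_d(\Delta(P))\leq\vol_d(\overline B(0,2\max_{z\in P}\norm z))<\infty\), and this volume is positive by the upper bound already proved. The ratio is therefore a well-defined number in \((0,2^{-d}]\).

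For the support statement, \(\eta_{d,k}=\alpha_\#\nu_{d,k}\) and \(\alpha(\bfu)\) is exactly this ratio for every \(\bfu\in\Sigma_{d,k}\), so \(\eta_{d,k}\) is concentrated on \((0,2^{-d}]\).

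For sharpness, I would take \(L=\overline B(0,r)\). Then \(h_L\equiv r\) and \(\rho_L\equiv r\), so \(\Delta(L)=\overline B(0,2r)\) and the ratio equals \(2^{-d}\) exactly. The definition of \(\Delta(L)\) and the radial identity \(\rho_{\Delta(L)}=2h_L\) were derived for arbitrary compact convex \(L\ni0\), so they apply to the ball.

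I do not expect a serious obstacle. The one point requiring care is that \(0\in\operatorname{int}P\), which makes \(\rho_P\) finite, positive and well defined, so that the polar volume formulas are legitimate. This holds because the defining half-spaces \(H(x_i)\) all contain the origin strictly, since \(\norm{x_i}>0\).
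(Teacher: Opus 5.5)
Your proof is correct and follows the paper's argument essentially verbatim: the pointwise bound $\rho_P(u)\le h_P(u)$, combined with $\rho_{\Delta(P)}=2h_P$, gives $2P\subseteq\Delta(P)$, and polar integration then yields $\vol_d(\Delta(P))\ge 2^d\vol_d(P)$. Your explicit checks of the support consequence for $\eta_{d,k}$ and of sharpness via $\Delta(\overline B(0,r))=\overline B(0,2r)$ fill in two points that the paper states but does not verify in its proof.
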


\begin{proof}
Let \(\rho_P\) and \(h_P\) denote the radial and support functions of \(P\). Since \(\rho_P(u)u\in P\),
\begin{equation*}
  \rho_P(u)
  =
  \inner{u}{\rho_P(u)u}
  \leq h_P(u),
  \qquad u\in\Sph^{d-1}.
\end{equation*}
Together with the identity \(\rho_{\Delta(P)}(u)=2h_P(u)\), this gives
\begin{equation*}
  \rho_{\Delta(P)}(u)
  \geq 2\rho_P(u),
  \qquad u\in\Sph^{d-1}.
\end{equation*}
Equivalently,
\[
  2P\subseteq\Delta(P).
\]
Polar integration therefore yields
\begin{align*}
  \vol_d(\Delta(P))
  &=
  \frac1d\int_{\Sph^{d-1}}
  \rho_{\Delta(P)}(u)^d\,\dd S(u)\\
  &\geq
  \frac{2^d}{d}\int_{\Sph^{d-1}}
  \rho_P(u)^d\,\dd S(u)\\
  &=
  2^d\vol_d(P).
\end{align*}
Therefore,
\[
  \frac{\vol_d(P)}{\vol_d(\Delta(P))}
  \leq 2^{-d}.
\]
The ratio is strictly positive because \(P\) is bounded and full-dimensional.
\end{proof}

Combining Lemma~\ref{lem:alpha-bound} with the cell-volume factorization in Theorem~\ref{thm:factorization} yields explicit mixture formulae for the distribution and density of the Palm-cell volume.

\begin{corollary}[CDF and PDF]
  \label{cor:cdf-pdf}
For \(v>0\),
  \begin{equation}
    \label{eq:main-cdf}
    F_{V_{d,\lambda}}(v)
    =
    \sum_{k=d+1}^\infty p_{d,k}
    \int_{(0,2^{-d}]}
    \frac{\gamma(k,\lambda v/a)}{\Gamma(k)}
    \,\eta_{d,k}(\dd a),
  \end{equation}
and
  \begin{equation}
    \label{eq:main-pdf}
    f_{V_{d,\lambda}}(v)
    =
    \sum_{k=d+1}^\infty p_{d,k}
    \int_{(0,2^{-d}]}
    \frac{\lambda^kv^{k-1}}{\Gamma(k)a^k}
    \exp\left(-\frac{\lambda v}{a}\right)
    \,\eta_{d,k}(\dd a),
  \end{equation}
where
  \[
    \gamma(k,z)=\int_0^z s^{k-1}e^{-s}\,\dd s
  \]
is the lower incomplete Gamma function. Terms with \(p_{d,k}=0\) are interpreted as zero.
\end{corollary}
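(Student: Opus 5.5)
We derive both formulae by conditioning on the facet number and using the product representation of Theorem~\ref{thm:factorization}. By the law of total probability over \(K\), together with \(\mathbb P(K=k)=p_{d,k}\) and the fact that \(K\geq d+1\) almost surely (the \(p_{d,k}\) sum to one, since \(C_0\) is almost surely a bounded polytope with finitely many facets), we obtain
\[
  F_{V_{d,\lambda}}(v)=\sum_{k=d+1}^\infty p_{d,k}\,\mathbb P\{V_{d,\lambda}\leq v\mid K=k\}.
\]
Terms with \(p_{d,k}=0\) are dropped, consistent with the stated convention.

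For each \(k\) with \(p_{d,k}>0\), \eqref{eq:V-factorization} gives \(V_{d,\lambda}\mid\{K=k\}\overset{\mathrm d}=T_kA_k\), where \(T_k\sim\operatorname{Gamma}(k,\operatorname{rate}=\lambda)\) is independent of \(A_k\sim\eta_{d,k}\). By Lemma~\ref{lem:alpha-bound}, \(\eta_{d,k}\) is concentrated on \((0,2^{-d}]\), so \(a>0\) and we may condition on \(A_k=a\). Independence and Fubini's theorem then give
\[
  \mathbb P\{T_kA_k\leq v\}=\int_{(0,2^{-d}]}\mathbb P\{T_k\leq v/a\}\,\eta_{d,k}(\dd a).
\]
The substitution \(s=\lambda t\) gives \(\mathbb P\{T_k\leq v/a\}=\gamma(k,\lambda v/a)/\Gamma(k)\). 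This proves \eqref{eq:main-cdf}.

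For the density, we instead use the conditional joint law \eqref{eq:theorem-product-measure} directly. For Borel \(B\), Fubini's theorem gives
\[
  \mathbb P\{T_kA_k\in B\}=\int\!\int \ind\{ta\in B\}\frac{\lambda^k}{\Gamma(k)}t^{k-1}e^{-\lambda t}\,\dd t\,\eta_{d,k}(\dd a).
\]
For fixed \(a>0\) we change variables \(v=ta\), so \(\dd t=\dd v/a\). The inner integral becomes \(\int_B \lambda^k v^{k-1}a^{-k}e^{-\lambda v/a}\,\dd v/\Gamma(k)\). Multiplying by \(p_{d,k}\), summing over \(k\), and interchanging the sum and integrals by Tonelli (all integrands are nonnegative), we obtain
\[
  \mathbb P\{V_{d,\lambda}\in B\}=\int_B g(v)\,\dd v,
\]
where \(g\) is the right-hand side of \eqref{eq:main-pdf}. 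Hence \(g\) is a density of \(V_{d,\lambda}\). Taking \(B=(0,\infty)\) shows that \(g\) has total mass one, so \(g<\infty\) almost everywhere.

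The main point needing care is that \eqref{eq:main-pdf} is an identity of densities, that is, it holds for Lebesgue-almost every \(v\). To obtain it pointwise for every \(v>0\), we would show that the series converges locally uniformly. The \(k\)th term is bounded by \(p_{d,k}\sup_{s>0}s^ke^{-s}/(\Gamma(k)v)=p_{d,k}k^ke^{-k}/(\Gamma(k)v)\), which grows like \(p_{d,k}\sqrt{k}/v\) by Stirling's formula. Pointwise convergence therefore follows whenever \(\sum_k p_{d,k}\sqrt k<\infty\), which holds because the mean facet number is finite. Under this condition the sum is continuous, and the identity holds everywhere. Without it, we simply read \eqref{eq:main-pdf} as a version of the density.
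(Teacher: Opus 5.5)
Your proposal is correct and follows the route the paper indicates. The paper gives no separate proof and calls the corollary an immediate combination of Theorem~\ref{thm:factorization} and Lemma~\ref{lem:alpha-bound}: condition on $K$, use $V_{d,\lambda}\mid\{K=k\}\overset{\mathrm d}=T_kA_k$ with $\eta_{d,k}$ supported on $(0,2^{-d}]$, and integrate the Gamma distribution function and density against $\eta_{d,k}$. Your last paragraph, which upgrades the almost-everywhere density identity to a pointwise one via the $\sqrt{k}$ bound and finiteness of $\E[K]$, is a valid refinement that the paper does not address.
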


\begin{remark}[Intensity scaling]
Writing $C_{0,\lambda}$ for the Palm cell at intensity $\lambda$, the mapping and scaling properties of Poisson processes \cite{LastPenrose2017,SchneiderWeil2008} give
  \begin{equation*}
    C_{0,\lambda}\overset{\mathrm d}=\lambda^{-1/d}C_{0,1},
    \qquad
    V_{d,\lambda}\overset{\mathrm d}=\lambda^{-1}V_{d,1}.
  \end{equation*}
Thus $Y_d=\lambda V_{d,\lambda}$, $p_{d,k}$, and $\eta_{d,k}$ do not depend on $\lambda$, as is also evident from the normalized factorization.
\end{remark}

\subsection{Transforms, moments, and Gamma diagnostics}
\label{Section:III-A}

The factorization reduces transforms and moments of the typical-cell volume to corresponding quantities of the shape factor \(A_k\). Consequently, all departures from a pure Gamma law are encoded by the distribution of $A_k$ and by the mixing over the facet number.

\begin{proposition}[Transforms and moments]
  \label{prop:transforms-moments}
For every $s\geq0$,
  \begin{equation}
    \label{eq:laplace-unconditional}
    \E[e^{-sY_d}]
    =
    \sum_{k=d+1}^\infty
    p_{d,k}\,
    \E\bigl[(1+sA_k)^{-k}\bigr],
  \end{equation}
with the convention that terms with $p_{d,k}=0$ contribute zero. Moreover, for every integer $r\geq1$,
  \begin{equation}
    \label{eq:moment-unconditional}
    \E[Y_d^r]
    =
    \sum_{k=d+1}^\infty
    p_{d,k}\,
    (k)_r\,\E[A_k^r],
  \end{equation}
where the identity is understood in \([0,\infty]\), and \((k)_r=\Gamma(k+r)/\Gamma(k)\) denotes the rising factorial, also called the Pochhammer symbol.
\end{proposition}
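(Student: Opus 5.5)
The plan is to start from the normalized factorization \eqref{eq:Y-factorization} of Theorem~\ref{thm:factorization}: $Y_d\overset{\mathrm d}=A_KZ_K$, where $\mathbb P(K=k)=p_{d,k}$ and, conditionally on $K=k$, $A_k\sim\eta_{d,k}$ and $Z_k\sim\operatorname{Gamma}(k,\operatorname{rate}=1)$ are independent. Both identities then come from conditioning on $K$ and applying the law of total expectation. For a nonnegative measurable $g$,
\[
  \E[g(Y_d)]=\sum_{k\geq d+1}p_{d,k}\,\E[g(A_kZ_k)],
\]
with terms having $p_{d,k}=0$ dropped. Since $\sum_k p_{d,k}=1$, the cell is almost surely bounded with finitely many facets, so this is a genuine countable partition of the probability space. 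Because $g\geq0$, the sum is justified by monotone convergence (Tonelli for counting measure), and it holds in $[0,\infty]$.

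For the Laplace transform I will take $g(y)=e^{-sy}$ with $s\geq0$. Conditioning on $A_k$ and using the independence $A_k\perp Z_k$ together with Fubini on the product law gives
\[
  \E[e^{-sA_kZ_k}]=\int_{(0,2^{-d}]}\E[e^{-saZ_k}]\,\eta_{d,k}(\dd a).
\]
The Gamma Laplace transform then gives $\E[e^{-saZ_k}]=(1+sa)^{-k}$, which is finite since $sa\geq0$. Substituting this in yields \eqref{eq:laplace-unconditional}. Every term is bounded by $p_{d,k}$, so the series converges absolutely.

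For the moments I will take $g(y)=y^r$. The same independence argument gives $\E[(A_kZ_k)^r]=\E[A_k^r]\,\E[Z_k^r]$, and the Gamma moment is $\E[Z_k^r]=\Gamma(k+r)/\Gamma(k)=(k)_r$. Lemma~\ref{lem:alpha-bound} bounds each factor by $\E[A_k^r]\leq2^{-dr}$, so every term is finite. The series itself may diverge only through the sum over $k$, which is why the statement is read in $[0,\infty]$.

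I expect the main obstacle to be purely technical: making the mixture decomposition rigorous, namely measurability of $A_K$ and the legitimacy of exchanging the countable sum with the expectation. Nonnegativity handles both, and $r$ integer is not needed beyond the rising-factorial notation. I would note that the moment identity is unconditional: no separate finiteness claim is required, since the equality holds in $[0,\infty]$.
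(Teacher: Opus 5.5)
Your proposal is correct and follows essentially the same route as the paper. You condition on $K=k$ in $Y_d\overset{\mathrm d}=A_KZ_K$, use the independence of $A_k$ and $Z_k$ to reduce to the Gamma Laplace transform $(1+sa)^{-k}$ and the Gamma moments $a^r(k)_r$, and justify exchanging the sum with the expectation by Tonelli for nonnegative integrands. Your extra remarks, namely that each moment term is finite because $\E[A_k^r]\leq2^{-dr}$ by Lemma~\ref{lem:alpha-bound} and that the Laplace series converges absolutely, are correct refinements rather than a different argument.
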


\begin{proof}
Fix $k$ with $p_{d,k}>0$.  By Theorem~\ref{thm:factorization}, conditionally on $K=k$,
  \begin{equation*}
    Y_d=A_kZ_k,
    \qquad
    Z_k\sim\operatorname{Gamma}(k,\operatorname{rate}=1),
  \end{equation*}
and $Z_k$ is independent of $A_k$.  Therefore, conditionally on $A_k=a$,
  \begin{equation*}
    Y_d\mid\{K=k,A_k=a\}
    \overset{\rm d}=aZ_k.
  \end{equation*}
The Laplace transform of $Z_k$ gives
  \begin{equation*}
    \E[e^{-sY_d}\mid K=k,A_k=a]
    =
    \E[e^{-saZ_k}]
    =
    (1+sa)^{-k}.
  \end{equation*}
Averaging over $A_k$ yields
  \begin{equation*}
    \E[e^{-sY_d}\mid K=k]
    =
    \E\bigl[(1+sA_k)^{-k}\bigr].
  \end{equation*}
A final average over $K$ gives \eqref{eq:laplace-unconditional}.

Similarly, for every integer $r\geq1$,
  \begin{equation*}
    \E[Y_d^r\mid K=k,A_k=a]
    =
    a^r\E[Z_k^r]
    =
    a^r\frac{\Gamma(k+r)}{\Gamma(k)}.
  \end{equation*}
Hence
  \begin{equation*}
    \E[Y_d^r\mid K=k]
    =
    (k)_r\E[A_k^r],
    \qquad
    (k)_r=\frac{\Gamma(k+r)}{\Gamma(k)}.
  \end{equation*}
Averaging over $K$ gives \eqref{eq:moment-unconditional}.  Since all terms are nonnegative, Tonelli's theorem justifies the identities in the extended sense.
\end{proof}

Theorem~\ref{thm:factorization} reduces the distributional analysis of the typical-cell volume to the shape factor \(A_k\). Proposition~\ref{prop:transforms-moments} makes this reduction explicit by expressing the Laplace transforms and moments as averages over the shape-factor laws \(\eta_{d,k}\). Consequently, once these laws are understood, the corresponding distributional properties of the typical-cell volume follow immediately.

For $r\geq1$ and $k$ with $p_{d,k}>0$, write
\begin{equation*}
  m_{r,k}:=\E[A_k^r],
\end{equation*}
whenever the expectation is finite; terms with $p_{d,k}=0$ are understood as zero in the following sums.

The standard mean-volume identity $\E[V_{d,\lambda}]=1/\lambda$ for the typical cell of a stationary Poisson--Voronoi tessellation \cite{Moller1994,SchneiderWeil2008} gives the normalization
\begin{equation*}
  \sum_{k=d+1}^\infty p_{d,k}k m_{1,k}=1.
\end{equation*}
When the second moment is finite, it follows that
\begin{equation*}
  \Var(Y_d)
  =
  \sum_{k=d+1}^\infty p_{d,k}k(k+1)m_{2,k}-1.
\end{equation*}
Consequently, whenever this variance is positive and finite, the shape parameter of the moment-matched unit-mean Gamma approximation is
\begin{equation}
  \label{eq:moment-matched-beta}
  \beta_d
  =
  \left[
  \sum_{k=d+1}^\infty p_{d,k}k(k+1)m_{2,k}-1
  \right]^{-1}.
\end{equation}

For every \(k\) with \(p_{d,k}>0\), the moments of the shape factor also admit an unnormalized configuration integral. Set $\lambda=1$ in Proposition~\ref{prop:configuration} and use the degree-zero homogeneity of $G/Q$ to obtain
\begin{equation}
  \label{eq:shape-moment-integral}
  m_{r,k}
  =
  \frac{\displaystyle
  \int_{\mathcal A_{d,k}}
  \left(\frac{G(\bfx)}{Q(\bfx)}\right)^r
  e^{-Q(\bfx)}\,\dd\bfx}
  {\displaystyle
  \int_{\mathcal A_{d,k}}e^{-Q(\bfx)}\,\dd\bfx}.
\end{equation}

The next proposition characterizes the conditional Gamma law whose shape is exactly the facet number. It does not exclude a nonconstant scale mixture from coinciding with a Gamma law of a different shape.

\begin{proposition}[Characterization of the shape-$k$ Gamma case and a concentration bound]
  \label{prop:gamma-characterization}
Fix $k$ with $p_{d,k}>0$.  The conditional law $Y_d\mid K=k$ is Gamma distributed with shape $k$ if and only if $A_k$ is almost surely constant.  More precisely, for $a>0$,
  \begin{equation}
    \label{eq:gamma-iff}
    Y_d\mid K=k
    \sim
    \operatorname{Gamma}(k,\operatorname{rate}=1/a)
    \quad\Longleftrightarrow\quad
    A_k=a\quad\text{almost surely}.
  \end{equation}
Moreover, with $\bar a_k=\E[A_k]$, for every $s\geq0$,
  \begin{equation}
    \label{eq:laplace-bound}
    0\leq
    \E[(1+sA_k)^{-k}]-(1+s\bar a_k)^{-k}
    \leq
    \frac{k(k+1)}2s^2\Var(A_k).
  \end{equation}
\end{proposition}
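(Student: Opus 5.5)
The plan is to work with the conditional Laplace transform provided by Proposition~\ref{prop:transforms-moments}. Conditionally on \(K=k\) we have \(\E[e^{-sY_d}\mid K=k]=\E[(1+sA_k)^{-k}]\), and \(A_k\) takes values in \((0,2^{-d}]\) by Lemma~\ref{lem:alpha-bound}.

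For the direction ``\(\Leftarrow\)'' in \eqref{eq:gamma-iff}: if \(A_k=a\) almost surely, then \(Y_d\mid K=k\overset{\mathrm d}=aZ_k\) by Theorem~\ref{thm:factorization}, and this is \(\operatorname{Gamma}(k,\operatorname{rate}=1/a)\). Nothing more is needed.

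For ``\(\Rightarrow\)'', I would compare second moments. Suppose \(Y_d\mid K=k\sim\operatorname{Gamma}(k,\operatorname{rate}=1/a)\). Its first two moments are \(ka\) and \(k(k+1)a^2\). The conditional moment identity in the proof of Proposition~\ref{prop:transforms-moments} gives
\[
\E[Y_d\mid K=k]=k\E[A_k],\qquad \E[Y_d^2\mid K=k]=k(k+1)\E[A_k^2].
\]
Both are finite because \(A_k\) is bounded. Matching them yields \(\E[A_k]=a\) and \(\E[A_k^2]=a^2\), so \(\Var(A_k)=0\) and \(A_k=a\) almost surely. An alternative route would use the Mellin transform: \(\E[Y^{r}]=\E[A_k^r]\,\E[Z_k^r]\), so the moments of \(A_k\) must equal \(a^r\) for all \(r\), which again determines the law by boundedness. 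The two-moment argument is simpler, so I would use it.

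For the bound \eqref{eq:laplace-bound}, set \(\phi(x)=(1+sx)^{-k}\) on \([0,\infty)\). Its derivatives are \(\phi'(x)=-ks(1+sx)^{-k-1}\) and \(\phi''(x)=k(k+1)s^2(1+sx)^{-k-2}\). Since \(\phi''\ge0\), \(\phi\) is convex, and Jensen's inequality gives the lower bound \(\E\phi(A_k)\ge\phi(\bar a_k)\). For the upper bound I would use Taylor's theorem with Lagrange remainder about \(\bar a_k\):
\[
\phi(x)=\phi(\bar a_k)+\phi'(\bar a_k)(x-\bar a_k)+\tfrac12\phi''(\xi)(x-\bar a_k)^2,
\]
with \(\xi\) between \(x\) and \(\bar a_k\), hence \(\xi\ge0\). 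On \([0,\infty)\) we have \(0\le\phi''(\xi)\le k(k+1)s^2\). Taking expectations removes the linear term and leaves \(\E\phi(A_k)-\phi(\bar a_k)\le\frac{k(k+1)}2s^2\Var(A_k)\). All expectations are finite because \(A_k\in(0,2^{-d}]\).

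I do not expect a real obstacle here. The only point needing care is that the remainder point \(\xi\) stays in \([0,\infty)\), where \(\phi''\) is bounded by its value at \(0\); this is guaranteed by the positivity of \(A_k\) and \(\bar a_k\).
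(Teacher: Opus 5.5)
Your proposal is correct and follows the paper's proof essentially step by step. The converse direction matches the first two conditional moments from Proposition~\ref{prop:transforms-moments} to force $\Var(A_k)=0$, and the bound comes from Taylor's theorem about $\bar a_k$ with $0\le\phi''\le k(k+1)s^2$; your appeal to Jensen for the lower bound is simply the $\phi''\ge0$ half of that same Taylor argument.
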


\begin{proof}
If $A_k=a$ almost surely, \eqref{eq:gamma-iff} follows immediately.  Conversely, equality with the stated Gamma law gives
  \begin{equation*}
    \E[Y_d\mid K=k]=ka,
    \qquad
    \E[Y_d^2\mid K=k]=k(k+1)a^2.
  \end{equation*}
Proposition~\ref{prop:transforms-moments} then implies $\E[A_k]=a$ and $\E[A_k^2]=a^2$, hence $\Var(A_k)=0$.

For the bound, put $\phi(a)=(1+sa)^{-k}$.  Since
  \begin{equation*}
    0\leq\phi''(a)
    =
    k(k+1)s^2(1+sa)^{-k-2}
    \leq k(k+1)s^2,
  \end{equation*}
Taylor's theorem about $\bar a_k$, followed by expectation, proves \eqref{eq:laplace-bound}.
\end{proof}

Proposition~\ref{prop:gamma-characterization} gives a precise explanation for the success and limitation of Gamma approximations with shape $k$. The conditional law has Gamma shape exactly $k$ if and only if the shape factor \(A_k\) is deterministic. A nonconstant scale mixture can in principle coincide with a Gamma law of a different shape, so the proposition makes no claim about that possibility. The concentration bound quantifies the effect of shape variability at the level of Laplace transforms. Obtaining analogous bounds for the densities themselves would require substantially stronger regularity information on the shape-factor laws.

If $Y_d\mid K=k$ is moment-matched to a Gamma distribution, its effective shape parameter is
\begin{equation}
  \label{eq:conditional-effective-shape}
  \beta_{k,\mathrm{eff}}
  =
  \frac{k}{1+(k+1)c_{A,k}^2},
  \qquad
  c_{A,k}^2
  =
  \frac{\Var(A_k)}{\E[A_k]^2}.
\end{equation}
Thus nonzero shape variability lowers the moment-matched conditional Gamma shape below $k$.

\subsection{Small-volume behaviour}
\label{Section:III-B}

The scale--shape factorization separates the lower tail of the typical-cell volume into combinatorial and geometric components.  The power of the leading term is determined by the smallest admissible facet number, \(d+1\), whereas its coefficient depends on a negative moment of the shape factor \(A_k\). We first translate the required negative moments into integrals over the Euclidean configuration sphere and give geometric conditions for their finiteness.  We then derive the small-volume limit.

Recall from Section~\ref{Section:II-C} that, with \(n=dk\),
\[
  \Theta_{d,k}
  =
  \mathcal A_{d,k}^{\rm reg}\cap\Sph^{n-1},
  \qquad
  \psi(\theta)=Q(\theta)^{-1/d}\theta,
\]
and
\begin{equation}
  \label{eq:shape-measure-angular-recall}
  \mu_{d,k}
  =
  \psi_\#\left(
    \frac1d Q(\theta)^{-k}
    \Haus^{n-1}\big|_{\Theta_{d,k}}
  \right).
\end{equation}
The next result shows that, at the critical negative order \(k\), the factors involving the flower volume cancel exactly.

\begin{proposition}[Angular representation of negative shape-factor moments]
  \label{prop:negative-moment-geometric-criterion}
Fix \(k\geq d+1\) with \(p_{d,k}>0\).  Then
  \begin{equation}
    \label{eq:negative-moment-angular}
    \E[A_k^{-k}]
    =
    \frac{1}{dkp_{d,k}}
    \int_{\Theta_{d,k}}
      G(\theta)^{-k}\,
      \dd\Haus^{dk-1}(\theta),
  \end{equation}
with equality in \([0,\infty]\).

Define
  \begin{equation}
    \label{eq:spherical-small-volume-function}
    M_{d,k}(\varepsilon)
    =
    \Haus^{dk-1}
    \bigl(
      \{\theta\in\Theta_{d,k}:G(\theta)\leq\varepsilon\}
    \bigr),
    \qquad \varepsilon>0.
  \end{equation}
If there exist \(C<\infty\), \(\delta>0\), and \(\varepsilon_0>0\) such that
  \begin{equation}
    \label{eq:spherical-small-volume-condition}
    M_{d,k}(\varepsilon)
    \leq C\varepsilon^{k+\delta},
    \qquad 0<\varepsilon\leq\varepsilon_0,
  \end{equation}
then
  \begin{equation}
    \label{eq:negative-moment-finite-geometric}
    \E[A_k^{-k}]<\infty.
  \end{equation}
\end{proposition}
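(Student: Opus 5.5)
The plan is to compute $\E[A_k^{-k}]$ directly from the definitions of $\nu_{d,k}$ and $\mu_{d,k}$, and then check that the flower-volume weights cancel. Since $A_k=\alpha(U_k)$ with $U_k\sim\nu_{d,k}=\mu_{d,k}/(kp_{d,k})$, we have
\[
\E[A_k^{-k}]=\frac{1}{kp_{d,k}}\int_{\Sigma_{d,k}}\alpha(\bfu)^{-k}\,\mu_{d,k}(\dd\bfu).
\]
We then pull back through the push-forward \eqref{eq:shape-measure-angular-recall}. This gives the integral
\[
\frac1{dkp_{d,k}}\int_{\Theta_{d,k}}\alpha(\psi(\theta))^{-k}\,Q(\theta)^{-k}\,\dd\Haus^{dk-1}(\theta).
\]
By degree-zero homogeneity of $G/Q$ (from \eqref{eq:homogeneity}),
\[
\alpha(\psi(\theta))=\frac{G(\theta)}{Q(\theta)}.
\]
Hence $\alpha(\psi(\theta))^{-k}Q(\theta)^{-k}=G(\theta)^{-k}$, which is \eqref{eq:negative-moment-angular}. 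Everything here is a nonnegative measurable integrand, with measurability of $G$ from Lemma~\ref{lem:admissible-measurable} and of $Q$ from Lemma~\ref{lem:Q-measurable}. The change-of-variables formula for push-forwards therefore holds in $[0,\infty]$ without any integrability assumption. Positivity of $G$ on admissible configurations ensures that $G^{-k}$ is well defined.

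For the finiteness criterion, I will use the layer-cake formula with respect to the measure $\Haus^{dk-1}|_{\Theta_{d,k}}$:
\[
\int_{\Theta_{d,k}}G^{-k}\,\dd\Haus^{dk-1}=\int_0^\infty \Haus^{dk-1}\{G^{-k}>s\}\,\dd s .
\]
Substituting $s=\varepsilon^{-k}$ turns the right-hand side into
\[
\int_0^\infty k\varepsilon^{-k-1}M_{d,k}(\varepsilon)\,\dd\varepsilon,
\]
where strict versus non-strict inequality is immaterial up to monotone-limit issues, and it suffices to bound the integral with $M_{d,k}$.

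I will split this integral at $\varepsilon_0$. On $(0,\varepsilon_0]$, the hypothesis \eqref{eq:spherical-small-volume-condition} gives the integrand bound $kC\varepsilon^{\delta-1}$, which is integrable. On $[\varepsilon_0,\infty)$, we use $M_{d,k}(\varepsilon)\le\Haus^{dk-1}(\Sph^{dk-1})<\infty$, so the integral is at most a constant times $\int_{\varepsilon_0}^\infty\varepsilon^{-k-1}\,\dd\varepsilon<\infty$. Alternatively, $G$ is bounded on the sphere, since unit-norm neighbours give cells contained in a bounded region, so $M_{d,k}$ is eventually constant. Combining the two pieces with \eqref{eq:negative-moment-angular} yields \eqref{eq:negative-moment-finite-geometric}.

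The main point requiring care is the first step: correctly matching the Jacobian weight $\tfrac1dQ^{-k}$ in $\mu_{d,k}$ with the normalization of $\alpha$ on $\Sigma_{d,k}$. Once the homogeneity identity $G(\psi(\theta))=G(\theta)/Q(\theta)$ is recorded, the cancellation is exact and the remainder is routine.
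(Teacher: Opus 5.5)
Your proof is correct and follows essentially the same route as the paper: the same push-forward computation giving $\alpha(\psi(\theta))^{-k}Q(\theta)^{-k}=G(\theta)^{-k}$, then a layer-cake bound split at $\varepsilon_0$. The paper splits the domain into $\{G>\varepsilon_0\}$ and $\{G\le\varepsilon_0\}$ before applying the layer-cake formula, which is only a cosmetic difference. One caution: your parenthetical alternative, that $G$ is bounded on $\Theta_{d,k}$, is false. Configurations that nearly fail to positively span $\mathbb R^d$, e.g.\ $x_1=(1,0)$, $x_2=(-1,\eta)$, $x_3=(0,-1)$ normalized to the sphere, give triangles of area of order $1/\eta$. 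Your main argument via $M_{d,k}(\varepsilon)\le\Haus^{dk-1}(\Sph^{dk-1})$ does not need this claim.
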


\begin{proof}
By homogeneity and the definition of \(\psi\),
  \[
    \alpha(\psi(\theta))
    =
    G(\psi(\theta))
    =
    \frac{G(\theta)}{Q(\theta)}.
  \]
Using \eqref{eq:shape-measure-angular-recall}, we therefore obtain
  \begin{align*}
    \int_{\Sigma_{d,k}}\alpha(u)^{-k}\,\mu_{d,k}(\dd u)
    & =
    \frac1d
    \int_{\Theta_{d,k}}
      \left(\frac{G(\theta)}{Q(\theta)}\right)^{-k}
      Q(\theta)^{-k}\,
      \dd\Haus^{dk-1}(\theta) \\
    & =
    \frac1d
    \int_{\Theta_{d,k}}
      G(\theta)^{-k}\,
      \dd\Haus^{dk-1}(\theta).
  \end{align*}
Since \(A_k=\alpha(U_k)\), \(U_k\sim\nu_{d,k}\), and \(\nu_{d,k}=\mu_{d,k}/(kp_{d,k})\), this proves \eqref{eq:negative-moment-angular}.

It remains to control the integral near \(G=0\).  The contribution from \(\{G>\varepsilon_0\}\) is finite because \(\Theta_{d,k}\subset\Sph^{dk-1}\) has finite spherical measure.  On the remaining set, the layer-cake formula gives
  \begin{align}
    \int_{\{G\leq\varepsilon_0\}}
      G(\theta)^{-k}\,
      \dd\Haus^{dk-1}(\theta)
     = \varepsilon_0^{-k}M_{d,k}(\varepsilon_0)
        + k\int_0^{\varepsilon_0}
      \varepsilon^{-k-1}M_{d,k}(\varepsilon)\,
      \dd\varepsilon.
    \label{eq:negative-moment-layer-cake}
  \end{align}
Under \eqref{eq:spherical-small-volume-condition}, the last integral is at most
  \[
    kC\int_0^{\varepsilon_0}\varepsilon^{\delta-1}\,\dd\varepsilon
    =
    \frac{kC}{\delta}\varepsilon_0^\delta
    <\infty.
  \]
Equation~\eqref{eq:negative-moment-angular} now yields \eqref{eq:negative-moment-finite-geometric}.
\end{proof}

Proposition~\ref{prop:negative-moment-geometric-criterion} reduces the critical negative moment to a small-volume estimate on a finite-dimensional configuration sphere.  In particular, on the minimal-facet stratum \(k=d+1\), every candidate cell is a simplex, and
\begin{equation}
  \label{eq:minimal-facet-small-simplex-criterion}
  M_{d,d+1}(\varepsilon)
  =O(\varepsilon^{d+1+\delta})
  \quad\text{for some }\delta>0
\end{equation}
implies
\begin{equation}
  \label{eq:minimal-facet-negative-moment}
  \E[A_{d+1}^{-(d+1)}]<\infty.
\end{equation}

The same identity also makes the summability condition used below entirely geometric. For every $y_0>0$ and $m=d+1$,
\begin{align}
  &\sum_{k=m+1}^{\infty}
    \frac{p_{d,k}y_0^{k-m}}{\Gamma(k)}\E[A_k^{-k}]<\infty
  \notag\\
  &\qquad\Longleftrightarrow
  \sum_{k=m+1}^{\infty}
    \frac{y_0^{k-m}}{dk\Gamma(k)}
    \int_{\Theta_{d,k}}G(\theta)^{-k}\,
    \dd\Haus^{dk-1}(\theta)<\infty.
  \label{eq:geometric-negative-moment-sum}
\end{align}

The following criterion expresses \eqref{eq:minimal-facet-small-simplex-criterion}, and its analogue for general \(k\), in terms of the geometry of the degenerate-configuration locus.

\begin{corollary}[Tubular criterion]
  \label{cor:negative-moment-tubular-criterion}
Define the zero-degeneracy locus by
  \[
    \mathcal D_{d,k}
    =
    \bigcap_{j=1}^{\infty}
    \overline{\{\theta\in\Theta_{d,k}:G(\theta)<1/j\}},
  \]
where closures are taken in \(\Sph^{dk-1}\). This set is closed, and every sequence \((\theta_n)\subset\Theta_{d,k}\) satisfying \(G(\theta_n)\to0\) has all its accumulation points in \(\mathcal D_{d,k}\). Suppose that, for some \(c,C,\beta,q,r_0>0\),
    \begin{equation}
    G(\theta)
    \geq
    c\,\operatorname{dist}(\theta,\mathcal D_{d,k})^\beta,
    \label{eq:volume-distance-bound} 
  \end{equation}    
    \begin{equation}
    \Haus^{dk-1}
    \bigl(
      \{\theta\in\Theta_{d,k}:
      \operatorname{dist}(\theta,\mathcal D_{d,k})\leq r\}
    \bigr)
    \leq Cr^q,
    \qquad 0<r\leq r_0.
    \label{eq:tubular-volume-bound}
  \end{equation}
If \(q>\beta k\), then
  \[
    \E[A_k^{-k}]<\infty.
  \]
In particular, \(q>\beta(d+1)\) is sufficient on the minimal-facet stratum. The existence of exponents satisfying these inequalities is not asserted here.
\end{corollary}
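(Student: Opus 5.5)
The plan is to verify condition \eqref{eq:spherical-small-volume-condition} of Proposition~\ref{prop:negative-moment-geometric-criterion} and then invoke that proposition. Two preliminary facts about \(\mathcal D_{d,k}\) come first.

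\emph{Closedness.} \(\mathcal D_{d,k}\) is an intersection of closed subsets of \(\Sph^{dk-1}\), so it is closed.

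\emph{Accumulation points.} Let \(\theta_n\in\Theta_{d,k}\) satisfy \(G(\theta_n)\to0\), and let \(\theta^*\) be an accumulation point. Fix \(j\). For all sufficiently large \(n\) we have \(G(\theta_n)<1/j\), so a subsequence converging to \(\theta^*\) eventually lies in \(\{G<1/j\}\). Hence \(\theta^*\) belongs to the closure of that set. Since \(j\) was arbitrary, \(\theta^*\in\mathcal D_{d,k}\).

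\emph{Small-volume estimate.} Take \(\theta\in\Theta_{d,k}\) with \(G(\theta)\leq\varepsilon\). By \eqref{eq:volume-distance-bound},
\[
  c\operatorname{dist}(\theta,\mathcal D_{d,k})^\beta\leq\varepsilon,
  \qquad\text{so}\qquad
  \operatorname{dist}(\theta,\mathcal D_{d,k})\leq(\varepsilon/c)^{1/\beta}.
\]
Set \(\varepsilon_0=c\,r_0^\beta\). For \(0<\varepsilon\leq\varepsilon_0\) the radius \(r=(\varepsilon/c)^{1/\beta}\) satisfies \(r\leq r_0\), so \eqref{eq:tubular-volume-bound} gives
\[
  M_{d,k}(\varepsilon)\leq C c^{-q/\beta}\varepsilon^{q/\beta}.
\]
The hypothesis \(q>\beta k\) means \(q/\beta=k+\delta\) with \(\delta=q/\beta-k>0\). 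This is exactly \eqref{eq:spherical-small-volume-condition}, and Proposition~\ref{prop:negative-moment-geometric-criterion} then yields \(\E[A_k^{-k}]<\infty\). The minimal-facet statement is the case \(k=d+1\).

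\emph{Where care is needed.} There is no real obstacle, but three points should be checked explicitly. First, the sublevel set \(\{G\leq\varepsilon\}\) must be contained in the tube around \(\mathcal D_{d,k}\) for the measure comparison to apply; the pointwise bound \eqref{eq:volume-distance-bound} ensures this. Second, \(\varepsilon_0\) must be chosen so that the resulting radius stays within the range \(r\leq r_0\) on which \eqref{eq:tubular-volume-bound} is assumed. Third, if \(\mathcal D_{d,k}\) is empty, then \(\operatorname{dist}=+\infty\), so \eqref{eq:volume-distance-bound} forces \(G\) to be bounded below. In that case \(M_{d,k}(\varepsilon)=0\) for small \(\varepsilon\), and the conclusion holds trivially.
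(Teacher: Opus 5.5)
Your proof is correct and follows the paper's argument. The pointwise bound \eqref{eq:volume-distance-bound} places \(\{G\leq\varepsilon\}\) inside the tube of radius \((\varepsilon/c)^{1/\beta}\), the tubular bound then gives \(M_{d,k}(\varepsilon)\leq Cc^{-q/\beta}\varepsilon^{q/\beta}\), and Proposition~\ref{prop:negative-moment-geometric-criterion} applies since \(q/\beta>k\). You also spell out the closedness and accumulation-point claims and the explicit choice \(\varepsilon_0=c\,r_0^\beta\), which the paper only asserts or leaves as ``for all sufficiently small \(\varepsilon\)''.
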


\begin{proof}
If \(G(\theta)\leq\varepsilon\), then \eqref{eq:volume-distance-bound} implies
  \[
    \operatorname{dist}(\theta,\mathcal D_{d,k})
    \leq(\varepsilon/c)^{1/\beta}.
  \]
Hence \eqref{eq:tubular-volume-bound} gives, for all sufficiently small \(\varepsilon>0\),
  \[
    M_{d,k}(\varepsilon)
    \leq
    Cc^{-q/\beta}\varepsilon^{q/\beta}.
  \]
Since \(q/\beta>k\), Proposition~\ref{prop:negative-moment-geometric-criterion} applies.
\end{proof}

We now turn to the density of the normalized typical-cell volume \(Y_d=\lambda V_{d,\lambda}\).  The smallest possible facet number is \(d+1\), so its contribution has order \(y^d\).  The remaining issue is to justify passage to the limit through the infinite mixture over larger facet numbers. Notice that $p_{d,d+1}>0$: the outward normals of a regular simplex centered at the origin give a configuration in the interior of $\mathcal A_{d,d+1}$, and all sufficiently small perturbations remain admissible. The configuration integral in Proposition~\ref{prop:configuration} therefore assigns strictly positive mass to this open set.

\begin{proposition}[Small-volume limit under summability]
  \label{prop:small-volume}
Put \(m=d+1\). Suppose
  \begin{equation}
    \label{eq:negative-moment-first}
    \E[A_m^{-m}]<\infty
  \end{equation}
and that there exists \(y_0>0\) such that
  \begin{equation}
    \label{eq:negative-moment-sum}
    \sum_{k=m+1}^{\infty}
      \frac{p_{d,k}y_0^{k-m}}{\Gamma(k)}
      \E[A_k^{-k}]
    <\infty.
  \end{equation}
Indices with \(p_{d,k}=0\) are omitted from the sum.  Then
  \begin{equation}
    \label{eq:small-volume-limit}
    \lim_{y\downarrow0}\frac{f_{Y_d}(y)}{y^d}
    =
    \frac{p_{d,d+1}}{\Gamma(d+1)}
    \E[A_{d+1}^{-(d+1)}].
  \end{equation}
In particular, under these assumptions, if \(Y_d\) has a unit-mean Gamma distribution, then its shape parameter must be \(d+1\).
\end{proposition}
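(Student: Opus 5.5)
Starting from the density mixture of Corollary~\ref{cor:cdf-pdf} with \(\lambda=1\), we have for \(y>0\)
\[
  f_{Y_d}(y)
  =
  \sum_{k=m}^\infty p_{d,k}\,
  \E\!\left[\frac{y^{k-1}}{\Gamma(k)A_k^k}e^{-y/A_k}\right],
\]
which follows from the factorization \(Y_d=A_KZ_K\) of Theorem~\ref{thm:factorization} by conditioning on \(A_k\). Dividing by \(y^d=y^{m-1}\) gives
\[
  \frac{f_{Y_d}(y)}{y^{d}}
  =
  \frac{p_{d,m}}{\Gamma(m)}\E\!\left[A_m^{-m}e^{-y/A_m}\right]
  +\sum_{k=m+1}^\infty \frac{p_{d,k}\,y^{k-m}}{\Gamma(k)}
  \E\!\left[A_k^{-k}e^{-y/A_k}\right].
\]
We then pass to the limit \(y\downarrow0\) in each piece separately.

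For the leading \(k=m\) term, the integrand \(A_m^{-m}e^{-y/A_m}\) increases to \(A_m^{-m}\) as \(y\downarrow0\) and is dominated by \(A_m^{-m}\). This is integrable by assumption \eqref{eq:negative-moment-first}. Monotone (or dominated) convergence therefore gives the limit \(\E[A_m^{-m}]\), and the term tends to \(p_{d,m}\E[A_m^{-m}]/\Gamma(m)\), which is exactly the right-hand side of \eqref{eq:small-volume-limit}.

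For the tail sum, we restrict to \(0<y\le y_0\) and use \(e^{-y/A_k}\le1\). Each summand is then bounded by \(p_{d,k}y^{k-m}\E[A_k^{-k}]/\Gamma(k)\), which in turn is at most \((y/y_0)\,p_{d,k}y_0^{k-m}\E[A_k^{-k}]/\Gamma(k)\), since \(k-m\ge1\) and \(y/y_0\le1\). Summing and using \eqref{eq:negative-moment-sum} shows that the tail is \(O(y)\), so it vanishes in the limit. This step is the only place where the interchange of limit and infinite sum needs care; the linear factor \(y/y_0\) handles it without any uniformity in \(k\) beyond \eqref{eq:negative-moment-sum}.

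Finally, for the Gamma consequence, suppose \(Y_d\sim\operatorname{Gamma}(\beta,\operatorname{rate}=\beta)\). Then \(f_{Y_d}(y)\sim \beta^\beta y^{\beta-1}/\Gamma(\beta)\) as \(y\downarrow0\). The limit just established is finite, and it is strictly positive because \(p_{d,d+1}>0\), as noted before the statement, and \(A_m>0\). Hence \(y^{\beta-1-d}\) must converge to a finite positive constant, which forces \(\beta-1=d\), that is, \(\beta=d+1\). The main obstacle, namely the justification of the limit through the infinite mixture, is entirely absorbed by the hypotheses, so no further difficulty arises.
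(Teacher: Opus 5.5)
Your proof is correct and follows the paper's argument. It uses the same mixture density divided by $y^d$, monotone or dominated convergence on the $k=d+1$ term under $\E[A_{d+1}^{-(d+1)}]<\infty$, and the summability hypothesis to control the terms with $k>d+1$ for $0<y\le y_0$. Your bound $y^{k-m}\le (y/y_0)\,y_0^{k-m}$, giving an explicit $O(y)$ tail, is just a quantitative form of the paper's dominated convergence over $k$. You also spell out the unit-mean Gamma consequence, using $p_{d,d+1}>0$ and $A_{d+1}>0$ for positivity of the limit, which the paper states without proof.
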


\begin{proof}
By intensity scaling and Corollary~\ref{cor:cdf-pdf},
  \begin{equation}
    \label{eq:normalized-mixture-density-small-volume}
    f_{Y_d}(y)
    =
    \sum_{k=m}^{\infty}
      \frac{p_{d,k}y^{k-1}}{\Gamma(k)}
      \E\left[
        A_k^{-k}\exp\left(-\frac{y}{A_k}\right)
      \right],
    \qquad y>0.
  \end{equation}
Divide \eqref{eq:normalized-mixture-density-small-volume} by \(y^{m-1}=y^d\).  For \(k=m\), the integrand
  \[
    A_m^{-m}\exp(-y/A_m)
  \]
converges almost surely to \(A_m^{-m}\) and is bounded by \(A_m^{-m}\).  Assumption \eqref{eq:negative-moment-first} and dominated convergence therefore give
  \[
    \lim_{y\downarrow0}
    \frac{p_{d,m}}{\Gamma(m)}
    \E\left[
      A_m^{-m}\exp\left(-\frac{y}{A_m}\right)
    \right]
    =
    \frac{p_{d,m}}{\Gamma(m)}\E[A_m^{-m}].
  \]

For \(k>m\) and \(0<y\leq y_0\), the corresponding rescaled term is bounded by
  \[
    \frac{p_{d,k}y_0^{k-m}}{\Gamma(k)}\E[A_k^{-k}].
  \]
These bounds are summable by \eqref{eq:negative-moment-sum}, whereas every fixed \(k>m\) term converges to zero as \(y \downarrow 0\).  Dominated convergence with respect to the counting measure on \(k\) now proves \eqref{eq:small-volume-limit}.
\end{proof}

The coefficient in \eqref{eq:small-volume-limit} has a clear structural interpretation.  The factor \(p_{d,d+1}\) is combinatorial: it is the probability that the Palm cell has the smallest possible number of facets.  The factor \(\E[A_{d+1}^{-(d+1)}]\) is geometric: by \eqref{eq:negative-moment-angular}, it measures the concentration of minimal-facet simplex configurations near zero candidate-cell volume.  Thus the scale--shape factorization separates the leading lower-tail coefficient into facet-number and shape contributions.

\begin{remark}[Status of the lower-tail assumptions]
  \label{rem:negative-moment-status}
Proposition~\ref{prop:negative-moment-geometric-criterion} converts the critical condition \eqref{eq:negative-moment-first} into an inverse-volume integral on a finite-dimensional configuration sphere. On the minimal-facet stratum, it is implied by the intensity-free simplex estimate \eqref{eq:minimal-facet-small-simplex-criterion}. Independently, \eqref{eq:geometric-negative-moment-sum} rewrites the higher-facet summability condition in geometric form.

Neither condition is established here. Corollary~\ref{cor:negative-moment-tubular-criterion} gives one sufficient criterion for a fixed stratum, but semialgebraicity alone does not imply the exponent inequality required there. The relevant degeneracies can be anisotropic and can intersect, so a proof appears to require a stratified local description in which the cell volume and spherical measure satisfy compatible product-type bounds. Moreover, proving the minimal-facet moment alone would not control the infinite sum over \(k>d+1\). Proposition~\ref{prop:small-volume} is therefore conditional on two logically separate geometric estimates.
\end{remark}

\section{Specializations and numerical illustration}
\label{sec:low-dimensional}

This section illustrates the general scale--shape factorization in several complementary ways. We first recover the one-dimensional Poisson--Voronoi cell, for which the shape factor and facet number are deterministic and the Gamma distribution follows immediately. We then derive an explicit coordinate representation for the planar case, obtaining finite-dimensional integral formulae for the area distribution. Finally, numerical experiments in dimensions one through four provide diagnostics for the principal consequences of the factorization and demonstrate Monte Carlo evaluation of the mixture representation.

\subsection{The one-dimensional case} 

The one-dimensional case is classical. Under the Palm distribution, the typical Poisson--Voronoi cell length is one half of the sum of the nearest-neighbour distances to the closest Poisson points on the left and on the right; see, e.g., \cite{Moller1994,Okabe2000}. Hence, it is Gamma-distributed with shape parameter \(2\) and rate \(2\lambda\). We now recover this classical fact as a degenerate instance of the scale--shape factorization.

A bounded Palm cell in one dimension has one effective neighbour on each side, so \(K=2\). Write the two distances as \(a,b>0\). For the ordered configuration \((-a,b)\),
\begin{equation*}
  P(-a,b)=\left[-\frac a2,\frac b2\right],
  \qquad
  G(-a,b)=\frac{a+b}{2}.
\end{equation*}
The flower is
\begin{equation*}
  \Delta(P(-a,b))=[-a,b],
  \qquad
  Q(-a,b)=a+b,
\end{equation*}
so the shape factor is deterministic:
\[
  \alpha=\frac{G}{Q}=\frac12 .
\]

The normalized surface $Q=1$ has two ordered branches,
\begin{align*}
  \Sigma_{1,2}^{(1)}&=\{(-s,1-s):0<s<1\},\\
  \Sigma_{1,2}^{(2)}&=\{(1-s,-s):0<s<1\}.
\end{align*}
On the first branch write $a=ts$ and $b=t(1-s)$, where $t>0$ and $0<s<1$. The Jacobian is $t$, so the radial decomposition assigns the measure $\dd s$ to the normalized branch. The same calculation applies to the second ordering. Hence each branch has shape-measure mass one, and therefore
\begin{equation*}
  \mu_{1,2}(\Sigma_{1,2})=2,
  \qquad
  p_{1,2}=1,
  \qquad
  \eta_{1,2}=\delta_{1/2}.
\end{equation*}
Corollary~\ref{cor:cdf-pdf} reduces to
\begin{align*}
  F_{V_{1,\lambda}}(v)
  &=1-(1+2\lambda v)e^{-2\lambda v},
  \\
  f_{V_{1,\lambda}}(v)
  &=4\lambda^2v e^{-2\lambda v}.
\end{align*}
Hence
\begin{equation*}
  V_{1,\lambda}\sim\operatorname{Gamma}(2,\operatorname{rate}=2\lambda),
\end{equation*}
which is also immediate from half the sum of the independent left and right nearest-neighbour distances.

The one-dimensional case illustrates the simplest possible shape-factor law. In two dimensions, the shape factor becomes genuinely random, although the general factorization still admits an explicit coordinate representation.

\subsection{Planar coordinate representation}
\label{sec:planar}

Set $d=2$ and write the $k$ effective neighbouring nuclei in cyclic order as
\begin{equation*}
  x_i=p_i e_i,
  \qquad
  p_i>0,
  \qquad
  e_i=(\cos\theta_i,\sin\theta_i)^T.
\end{equation*}
All indices in this subsection are interpreted modulo $k$.  Let
\begin{equation*}
  \delta_i=\theta_{i+1}-\theta_i
\end{equation*}
with cyclic interpretation and $\sum_i\delta_i=2\pi$.  Boundedness with nondegenerate adjacent supporting lines requires
\begin{equation*}
  0<\delta_i<\pi,
  \qquad i=1,\ldots,k.
\end{equation*}
The supporting line generated by $x_i$ is
\begin{equation*}
  L_i=\{z:\inner{z}{e_i}=p_i/2\}.
\end{equation*}
Writing $e_i^\perp=(-\sin\theta_i,\cos\theta_i)^T$, the vertex $v_i=L_i\cap L_{i+1}$ is obtained by solving
\begin{equation*}
  \inner{v_i}{e_i}=\frac{p_i}{2},
  \qquad
  \inner{v_i}{e_{i+1}}=\frac{p_{i+1}}{2}.
\end{equation*}
Since $e_{i+1}=\cos\delta_i e_i+\sin\delta_i e_i^\perp$, writing $v_i=(p_i/2)e_i+s_ie_i^\perp$ gives
\begin{equation*}
  \frac{p_i}{2}\cos\delta_i+s_i\sin\delta_i=\frac{p_{i+1}}2.
\end{equation*}
Thus
\begin{equation*}
  v_i
  =\frac{p_i}{2}e_i
  +\frac{p_{i+1}-p_i\cos\delta_i}{2\sin\delta_i}e_i^\perp.
\end{equation*}

The $i$th side lies on $L_i$ and has endpoints $v_{i-1}$ and $v_i$.  Their signed coordinates in the direction $e_i^\perp$ are
\begin{equation*}
  \frac{p_{i+1}-p_i\cos\delta_i}{2\sin\delta_i}
  \quad\text{and}\quad
  -\frac{p_{i-1}-p_i\cos\delta_{i-1}}{2\sin\delta_{i-1}},
\end{equation*}
respectively.  Therefore, the side length is
\begin{equation*}
  \ell_i
  =\frac{p_{i+1}-p_i\cos\delta_i}{2\sin\delta_i}
   +\frac{p_{i-1}-p_i\cos\delta_{i-1}}{2\sin\delta_{i-1}},
\end{equation*}
which is equivalent to
\begin{equation*}
  \ell_i
  =\frac{
  p_{i-1}\sin\delta_i
  +p_{i+1}\sin\delta_{i-1}
  -p_i\sin(\delta_{i-1}+\delta_i)}
  {2\sin\delta_{i-1}\sin\delta_i}.
\end{equation*}
The constraint indexed by $i$ is effective precisely when the relative interior of $F_i=P\cap L_i$ is nonempty, which in this cyclic nondegenerate representation is equivalent to $\ell_i>0$.  Hence all $k$ constraints are effective precisely when $\ell_i>0$ for every $i$; the boundary $\ell_i=0$ is a null degeneracy set.

Triangulation from the origin gives the polygon area.  The triangle with base $\ell_i$ on $L_i$ has height $p_i/2$, and therefore area $p_i\ell_i/4$.  Summing over sides yields
\begin{equation}
  \label{eq:planar-area}
  \mathcal G_k(\bfp,\bfd)
  =\frac14\sum_{i=1}^k p_i\ell_i.
\end{equation}

It remains to compute the flower area.  On the angular sector from $e_i$ to $e_{i+1}$, the support point of $P$ is $v_i$.  For
\begin{equation*}
  u(\varphi)=\cos\varphi\,e_i+\sin\varphi\,e_i^\perp,
  \qquad 0\leq\varphi\leq\delta_i,
\end{equation*}
we have
\begin{equation*}
  h_P(u(\varphi))
  =\inner{u(\varphi)}{v_i}
  =\frac{p_i\sin(\delta_i-\varphi)+p_{i+1}\sin\varphi}{2\sin\delta_i}.
\end{equation*}
Since $d=2$, formula \eqref{eq:flower-support} becomes
\begin{equation*}
  Q=2\int_0^{2\pi}h_P(u(\theta))^2\,\dd\theta.
\end{equation*}
A direct integration over $0\leq\varphi\leq\delta$ gives the sector contribution
\begin{equation}
  \Psi(\delta,p,q)
  =\frac{1}{2\sin^2\delta}
  \Bigg[
  \frac{\delta}{2}(p^2+q^2-2pq\cos\delta)
  +pq\sin\delta
  -\frac{p^2+q^2}{4}\sin(2\delta)
  \Bigg].
  \label{eq:planar-local-flower}
\end{equation}
Therefore
\begin{equation}
  \label{eq:planar-flower}
  \mathcal Q_k(\bfp,\bfd)
  =\sum_{i=1}^k\Psi(\delta_i,p_i,p_{i+1}).
\end{equation}

Let
\begin{equation*}
  S_k=\left\{\bfd\in[0,2\pi]^k:\sum_{i=1}^k\delta_i=2\pi\right\}
\end{equation*}
and let $\sigma_k$ be the normalized Lebesgue measure on this simplex.  Its $(k-1)$-dimensional coordinate volume is $(2\pi)^{k-1}/(k-1)!$.  Define
\begin{align*}
  \mathcal W_{k,\lambda}(\bfp,\bfd)
  &=\ind\{0<\delta_i<\pi\ \text{for every }i\}
  \left(\prod_{i=1}^k p_i\ind\{\ell_i>0\}\right)
  e^{-\lambda\mathcal Q_k(\bfp,\bfd)}.
\end{align*}

The angular factor is as follows.  In polar coordinates, $\dd x_i=p_i\,\dd p_i\,\dd\theta_i$.  The Mecke factor $1/k!$ first removes the $k!$ permutations of the same unordered set of angular directions.  After sorting the angles, the remaining integral is rotation invariant.  Passing from sorted angles to cyclic gaps quotients out the choice of the initial vertex; there are $k$ possible cyclic anchors.  Consequently, for any rotation-invariant cyclic integrand,
\begin{equation*}
  \frac1{k!}\int_{[0,2\pi)^k}F(\theta_1,\ldots,\theta_k)\,\dd\theta_1\cdots\dd\theta_k
  =\frac{2\pi}{k}\int_{S_k}F(\bfd)\,\dd\bfd.
\end{equation*}
Using the normalized measure $\sigma_k$ converts this factor into
\begin{equation*}
  \frac{2\pi}{k}\frac{(2\pi)^{k-1}}{(k-1)!}
  =\frac{(2\pi)^k}{k!}.
\end{equation*}
Here and below $\dd\bfp=\dd p_1\cdots\dd p_k$.  Consequently, for $a>0$,
\begin{align}
  F_{V_{2,\lambda}}(a)
  &=\sum_{k=3}^\infty\frac{(2\pi)^k\lambda^k}{k!}
  \int_{S_k}\int_{(0,\infty)^k}
  \ind\{\mathcal G_k(\bfp,\bfd)\leq a\}
  \mathcal W_{k,\lambda}(\bfp,\bfd)
  \,\dd\bfp\,\sigma_k(\dd\bfd).
  \label{eq:planar-cdf}
\end{align}
Its weak derivative is
\begin{align}
  f_{V_{2,\lambda}}(a)
  &=\sum_{k=3}^\infty\frac{(2\pi)^k\lambda^k}{k!}
  \int_{S_k}\int_{(0,\infty)^k}
  \delta_{\mathrm D}(a-\mathcal G_k(\bfp,\bfd))
  \mathcal W_{k,\lambda}(\bfp,\bfd)
  \,\dd\bfp\,\sigma_k(\dd\bfd),
  \label{eq:planar-pdf}
\end{align}
where $\delta_{\mathrm D}$ is the Dirac distribution and the identity is understood in the sense of distributions \cite[Chapter~9]{Folland1999}.

As a geometric check, a regular $k$-gon with inradius $r$ has
\begin{align*}
  G_k^{\mathrm{reg}}
  &=kr^2\tan\frac{\pi}{k},\\
  Q_k^{\mathrm{reg}}
  &=kr^2\sec^2\frac{\pi}{k}
  \left(\frac{2\pi}{k}+\sin\frac{2\pi}{k}\right),
\end{align*}
and therefore
\begin{equation}
  \label{eq:regular-ratio}
  A_k^{\mathrm{reg}}
  =\frac{\tfrac12\sin(2\pi/k)}{2\pi/k+\sin(2\pi/k)}
  \longrightarrow\frac14.
\end{equation}
This limit agrees with Lemma~\ref{lem:alpha-bound} for $d=2$.

The preceding analytical constructions lead directly to numerical evaluation. We next provide diagnostics for the principal structural consequences of the factorization and illustrate Monte Carlo evaluation of the resulting mixture representation.

\subsection{Numerical illustration and Monte Carlo mixture evaluation}

The numerical experiments illustrate, rather than prove, consequences of the scale--shape factorization. We use two distinct data sets. A reproducibility run is used for the conditional factorization diagnostics in Table~\ref{tab:factorization-diagnostics}; the original mixture sample is retained for Figure~\ref{fig:mixture-vs-generalized-gamma}. The first study examines the conditional Gamma law of $\lambda Q$, the monotone association between $\lambda Q$ and $A=V/Q$, and the support bound $0<A\leq2^{-d}$. The second substitutes simulated pairs $(K,A)$ into Corollary~\ref{cor:cdf-pdf} to estimate the marginal density of $Y_d=\lambda V_{d,\lambda}$. Spearman correlation detects only monotone association and is not, by itself, a test of independence. The simulation code, machine-readable data, and full reproducibility outputs are available from the public repository stated in the Data and Code Availability section.

\subsubsection{Simulation protocol and factorization diagnostics}

We set $\lambda=1$ for intensity scaling. The reproducibility run used Python 3.7.6 (64-bit), NumPy 1.21.6, and SciPy 1.7.3. Its master seed was $20260711$; the Palm-cell generator in dimension $d$ used \texttt{default\_rng(20260711+1000d)}. The full platform specification and all run settings are archived in the public repository as \path{results/reproducibility_study/run_summary.json}.

We generate the Poisson process shell by shell. At a radius stage $(R_{\rm in},R_{\rm out})$, the number of new nuclei is
\[
  N\sim\operatorname{Poisson}
  \left\{
    \lambda\kappa_d(R_{\rm out}^d-R_{\rm in}^d)
  \right\}.
\]
Directions are obtained by normalizing independent standard Gaussian vectors, and the radius is generated from
\[
  r^d=R_{\rm in}^d+U(R_{\rm out}^d-R_{\rm in}^d),
  \qquad U\sim\operatorname{Uniform}(0,1).
\]
The simulation retains previously generated nuclei as the simulation radius expands. The initial radius is
\[
  R_0=3.5(\lambda\kappa_d)^{-1/d}.
\]
After an unsuccessful stage, it is updated by $R\leftarrow1.25R$, with at most six expansions.

The origin cell is constructed with \texttt{scipy.spatial.Voronoi}, and its volume is computed with \texttt{scipy.spatial.ConvexHull}; both routines use the Qhull implementation bundled with SciPy. A cell is certified when
\begin{equation}
  \label{eq:numerical-certification}
  r_{\max}<R/2,
\end{equation}
where $r_{\max}$ is its largest vertex radius. The implementation uses the conservative test $r_{\max}<(R/2)(1-10^{-9})$. Indeed, if $\norm{y}\geq R$ and $\norm{z}<R/2$, then
\[
  \inner{z}{y}<\frac R2\norm{y}\leq\frac{\norm{y}^2}{2},
\]
so a nucleus outside $B(0,R)$ cannot alter the certified cell. An unsuccessful stage is not a rejected realization: we enlarge the same Poisson process and test it again.

The reproducibility run produced \(20\,000\), \(5000\), and \(5000\) certified cells in dimensions \(2\), \(3\), and \(4\), respectively, with no rejected or uncertified realizations. The expansion histogram and intermediate failure counts are reported in Appendix~\ref{app:certification-diagnostics}, Table~\ref{tab:expansion-summary}; machine-readable records are provided in \path{results/reproducibility_study/run_summary.json}.

In every dimension, $K$ is the number of distinct nuclei sharing a Voronoi ridge with the Palm nucleus, as returned by \texttt{Voronoi.ridge\_points}. Thus $K$ counts supporting nucleus constraints, not simplices used by Qhull to represent a polygonal facet. We independently checked this value using the vertex--constraint residual
\[
  e_{vj}=\inner{v}{p_j}-\frac{\norm{p_j}^2}{2}.
\]
Constraint $j$ was declared active when
\[
  |e_{vj}|\leq5\times10^{-8}
  \left(1+\frac{\norm{p_j}^2}{2}\right)
\]
for at least one computed vertex. We counted distinct tight constraints separately, even when their supporting hyperplanes were nearly coplanar. The residual and Voronoi-ridge counts agreed for all $30\,000$ certified cells.

The flower volume is evaluated from
\[
  Q=2^d\kappa_d\,\E[h_P(U)^d],
\]
where $U$ is uniform on $\Sph^{d-1}$. Sobol nets were generated with \texttt{qmc.Sobol(scramble=True)} and \texttt{random\_base2(m)}. Their coordinates were clipped to $[10^{-12},1-10^{-12}]$, transformed by the standard-normal inverse distribution function, and normalized to the sphere. The main diagnostic calculation used two independent Owen scrambles with $m=11$ (2048 directions) and seeds $20260711+100000d$ and $20260712+100000d$. If $Q_i^{(1)}$ and $Q_i^{(2)}$ are the two estimates, we use their average and record
\[
  \Delta_i
  =
  \frac{|Q_i^{(1)}-Q_i^{(2)}|}
       {(Q_i^{(1)}+Q_i^{(2)})/2}.
\]
The median and 95th percentile of $\Delta_i$ were $2.93\times10^{-4}$ and $8.23\times10^{-4}$ for $d=2$, $7.51\times10^{-4}$ and $2.28\times10^{-3}$ for $d=3$, and $1.83\times10^{-3}$ and $5.18\times10^{-3}$ for $d=4$. A separate four-scramble experiment on $128$ cells per dimension compared $2^8,\ldots,2^{13}$ directions; at the adopted $2048$-point resolution, the median relative errors were $7.28\times10^{-5}$, $2.77\times10^{-4}$, and $6.53\times10^{-4}$ in dimensions $2$, $3$, and $4$. The complete convergence table is archived in the public repository as \path{results/reproducibility_study/sobol_convergence.csv}.

We compute the diagnostics within facet-number strata. Let $n_d$ be the total number of certified cells, let $n_k$ be the number with $K=k$, and let $\widehat F_{\lambda Q\mid K=k}$ be the empirical conditional distribution function. Define
\[
  \widehat p_{d,k}=\frac{n_k}{n_d},
  \qquad
  D^{(Q)}_{d,k}
  =
  \sup_{q\geq0}
  \left|
    \widehat F_{\lambda Q\mid K=k}(q)
    -F_{\operatorname{Gamma}(k,\operatorname{rate}=1)}(q)
  \right|.
\]
Table~\ref{tab:factorization-diagnostics} reports strata with \(n_k\geq200\). Its final row in each dimensional block reports the combined mass of the omitted strata. The omitted masses are \(0.00895\), \(0.12020\), and \(0.44200\) for \(d=2,3,4\), respectively. All cells, including those from omitted strata, enter the unconditional mixture estimator. The complete stratum table is archived in the public repository as \path{results/reproducibility_study/facet_diagnostics.csv}.

\begin{table}[!t]
  \centering
  \caption{Direct diagnostics for the scale--shape factorization, grouped by dimension \(d\).
  Here \(\widehat p_{d,k}=n_k/n_d\) is the empirical facet-number probability,
  \(D^{(Q)}_{d,k}\) is the one-sample Kolmogorov--Smirnov distance between
  \(\lambda Q\mid\{K=k\}\) and
  \(\operatorname{Gamma}(k,\operatorname{rate}=1)\), and
  \(\rho_S(\lambda Q,A\mid K=k)\) is the conditional Spearman correlation between
  \(\lambda Q\) and the shape factor \(A\). Individual strata with
  \(n_k\geq200\) are displayed; the remaining strata are combined in the omitted-strata row.
  All strata, including omitted ones, are included in the unconditional mixture estimate. The \(d=1\)
  row is exact.}
  \label{tab:factorization-diagnostics}

  \scriptsize
  \setlength{\tabcolsep}{3.2pt}
  \renewcommand{\arraystretch}{0.94}

  \begin{tabular}{
    S[table-format=2.0]
    S[table-format=5.0]
    S[table-format=1.5]
    S[table-format=1.4]
    S[table-format=-1.4]
    S[table-format=1.6]
    S[table-format=1.4]
  }
    \toprule
    {$k$}
      & {$n_k$}
      & {$\widehat p_{d,k}$}
      & {$D^{(Q)}_{d,k}$}
      & {$\rho_S(\lambda Q,A\mid K=k)$}
      & {$\max A$}
      & {$\max A/2^{-d}$} \\
    \midrule

    \multicolumn{7}{c}{$d=1$} \\
    \midrule
    2
      & \multicolumn{1}{c}{exact}
      & 1.00000
      & 0.0000
      & \multicolumn{1}{c}{n/a}
      & 0.500000
      & 1.0000 \\

    \midrule
    \multicolumn{7}{c}{$d=2$} \\
    \midrule
     3 &  244 & 0.01220 & 0.0457 &  0.0700 & 0.145776 & 0.5831 \\
     4 & 2109 & 0.10545 & 0.0121 & -0.0331 & 0.192426 & 0.7697 \\
     5 & 5181 & 0.25905 & 0.0101 &  0.0054 & 0.206880 & 0.8275 \\
     6 & 5869 & 0.29345 & 0.0116 & -0.0073 & 0.217056 & 0.8682 \\
     7 & 3998 & 0.19990 & 0.0115 &  0.0217 & 0.225062 & 0.9002 \\
     8 & 1841 & 0.09205 & 0.0192 &  0.0119 & 0.228478 & 0.9139 \\
     9 &  579 & 0.02895 & 0.0424 &  0.0158 & 0.232606 & 0.9304 \\
    \multicolumn{2}{c}{omitted strata}
       & 0.00895
       & \multicolumn{4}{c}{---} \\

    \midrule
    \multicolumn{7}{c}{$d=3$} \\
    \midrule
    11 & 245 & 0.04900 & 0.0438 &  0.0872 & 0.075081 & 0.6006 \\
    12 & 370 & 0.07400 & 0.0561 &  0.0350 & 0.075720 & 0.6058 \\
    13 & 530 & 0.10600 & 0.0321 &  0.0535 & 0.080939 & 0.6475 \\
    14 & 538 & 0.10760 & 0.0385 &  0.0113 & 0.082276 & 0.6582 \\
    15 & 642 & 0.12840 & 0.0366 & -0.0210 & 0.085198 & 0.6816 \\
    16 & 577 & 0.11540 & 0.0464 & -0.0174 & 0.084423 & 0.6754 \\
    17 & 525 & 0.10500 & 0.0521 &  0.0295 & 0.089126 & 0.7130 \\
    18 & 416 & 0.08320 & 0.0315 & -0.0091 & 0.087405 & 0.6992 \\
    19 & 341 & 0.06820 & 0.0367 & -0.1061 & 0.085039 & 0.6803 \\
    20 & 215 & 0.04300 & 0.0728 &  0.0568 & 0.087265 & 0.6981 \\
    \multicolumn{2}{c}{omitted strata}
       & 0.12020
       & \multicolumn{4}{c}{---} \\

    \midrule
    \multicolumn{7}{c}{$d=4$} \\
    \midrule
    32 & 225 & 0.04500 & 0.0537 &  0.0230 & 0.029931 & 0.4789 \\
    33 & 234 & 0.04680 & 0.0684 &  0.0438 & 0.031196 & 0.4991 \\
    34 & 270 & 0.05400 & 0.0418 &  0.0160 & 0.031426 & 0.5028 \\
    35 & 273 & 0.05460 & 0.0617 & -0.0315 & 0.033753 & 0.5400 \\
    36 & 284 & 0.05680 & 0.0361 & -0.0027 & 0.033000 & 0.5280 \\
    37 & 266 & 0.05320 & 0.0381 & -0.0162 & 0.031633 & 0.5061 \\
    38 & 278 & 0.05560 & 0.0323 & -0.0107 & 0.033394 & 0.5343 \\
    39 & 254 & 0.05080 & 0.0606 & -0.0677 & 0.033440 & 0.5350 \\
    40 & 275 & 0.05500 & 0.0390 &  0.0035 & 0.034030 & 0.5445 \\
    41 & 216 & 0.04320 & 0.0543 & -0.0398 & 0.034115 & 0.5458 \\
    42 & 215 & 0.04300 & 0.0792 & -0.0084 & 0.034237 & 0.5478 \\
    \multicolumn{2}{c}{omitted strata}
       & 0.44200
       & \multicolumn{4}{c}{---} \\
    \bottomrule
  \end{tabular}
\end{table}

Across the displayed strata, the largest relative errors in the conditional mean identity \(\E[\lambda Q\mid K=k]=k\) were \(8.72\times10^{-3}\), \(2.60\times10^{-2}\), and \(1.69\times10^{-2}\) for \(d=2,3,4\), respectively. The largest KS distances were \(0.0457\), \(0.0728\), and \(0.0792\), and the largest absolute rank correlations were \(0.0700\), \(0.1061\), and \(0.0677\). These values are consistent with the predicted conditional Gamma law and show only weak monotone scale--shape association within the displayed strata. They do not by themselves establish independence. Because numerical error in \(Q\) enters both \(Q\) and \(A=V/Q\), the rank correlations should be interpreted as descriptive diagnostics. The largest values of \(A/2^{-d}\) among the displayed strata were \(0.9304\), \(0.7130\), and \(0.5478\) for \(d=2,3,4\), respectively, consistent with Lemma~\ref{lem:alpha-bound}.

\subsubsection{Numerical evaluation of the mixture density}

Since Corollary~\ref{cor:cdf-pdf} expresses the density as an expectation over the joint law of $(K, A)$, it admits the empirical approximation
\begin{equation}
  \label{eq:numerical-mixture-estimator}
  \widehat f^{\rm mix}_d(y)
  =
  \frac{1}{M_d}\sum_{i=1}^{M_d}
  \frac{y^{K_i-1}}{\Gamma(K_i)A_i^{K_i}}
  \exp\left(-\frac{y}{A_i}\right),
  \qquad y>0.
\end{equation}
Each summand is a $\operatorname{Gamma}(K_i,\operatorname{rate}=1/A_i)$ density, so $\widehat f^{\rm mix}_d$ is nonnegative and integrates to one. In one dimension, the expectation reduces exactly to $4y\exp(-2y)$.

Figure~\ref{fig:mixture-vs-generalized-gamma} is based on the certified reproducibility samples, with $M_2=20\,000$ and $M_3=M_4=5000$. The adaptive simulation begins at $R_0=3.5(\lambda\kappa_d)^{-1/d}$ and enlarges the simulation window when certification fails, as detailed in Appendix~\ref{app:certification-diagnostics}. We evaluate flower volumes using $2048$ scrambled Sobol directions. The density curves are evaluated at $2000$ equally spaced points over $10^{-4}\leq y\leq4.5$.

\begin{figure}[!t]
  \centering
  \includegraphics[width=0.75\linewidth]{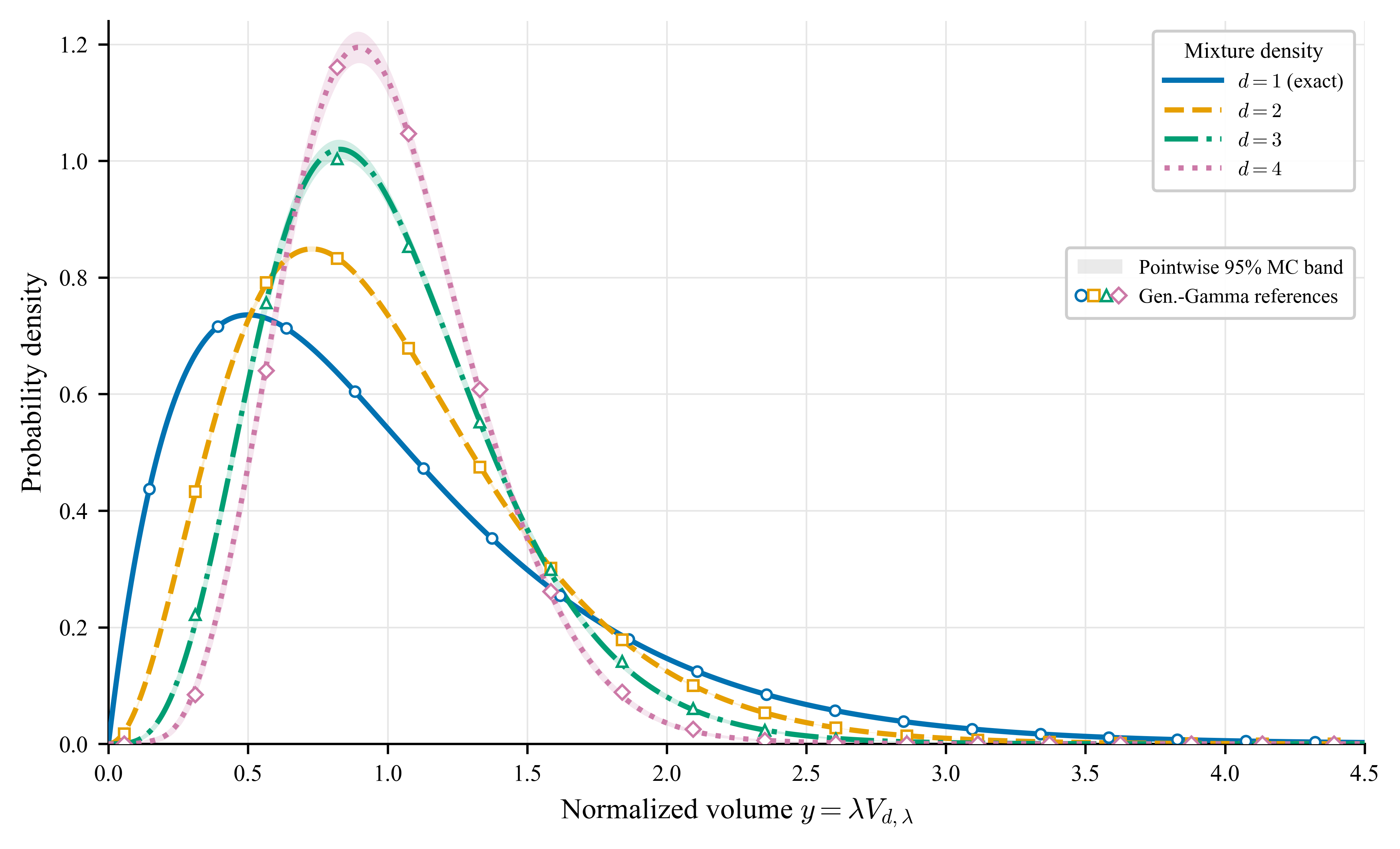}
  \caption{Monte Carlo plug-in estimates of the mixture density implied by the exact scale--shape representation (lines), with pointwise normal-approximation Monte Carlo bands (shading), and fixed generalized-Gamma reference densities (markers). The $d=1$ density is exact because the shape factor and facet number are deterministic. For $d=2,3,4$, the curves incorporate both shape variation and facet-number mixing. The bands, computed from the same samples as the corresponding curves, quantify finite Palm-cell sampling error; Sobol quadrature error is assessed separately. The reference densities are independently specified qualitative benchmarks.}
  \label{fig:mixture-vs-generalized-gamma}
\end{figure}

For comparison, we use the generalized-Gamma density
\begin{equation}
  \label{eq:numerical-generalized-gamma}
  h_d(y)
  =
  \frac{a_db_d^{c_d/a_d}}{\Gamma(c_d/a_d)}
  y^{c_d-1}\exp(-b_dy^{a_d}),
  \qquad y>0.
\end{equation}
The fixed parameters, taken from the numerical Poisson--Voronoi cell-size fits reported by Tanemura~\cite{TanemuraWeb}, are
\[
\begin{array}{c|ccc}
  d & a_d & b_d & c_d\\
  \hline
  1 & 1.00000 & 2.00000 & 2.00000\\
  2 & 1.07805 & 3.04011 & 3.31498\\
  3 & 1.16391 & 4.06342 & 4.80651\\
  4 & 1.29553 & 4.90493 & 6.49707
\end{array}
\]
These parameters are fixed independently of the Monte Carlo samples. For $d=1$, \eqref{eq:numerical-generalized-gamma} is the exact $\operatorname{Gamma}(2,\operatorname{rate}=2)$ density. For $d=2,3,4$, the reference densities serve only as qualitative parametric benchmarks.

At each grid point, we use the pointwise normal-approximation Monte Carlo band
\[
  \widehat f^{\rm mix}_d(y)
  \mathbin{\pm}
  1.96\,
  \frac{\operatorname{sd}\{g_1(y),\ldots,g_{M_d}(y)\}}{\sqrt{M_d}},
\]
truncated below at zero, where $g_i$ denotes the cell-level Gamma kernel in \eqref{eq:numerical-mixture-estimator}. These bands quantify finite Palm-cell sampling error; Sobol quadrature error is assessed separately in the convergence study above. The estimates and pointwise limits are archived in the public repository as \path{results/reproducibility_study/curve_d2.csv}, \path{results/reproducibility_study/curve_d3.csv}, and \path{results/reproducibility_study/curve_d4.csv}.

As a numerical normalization check, we combined trapezoidal integration over the displayed grid with the analytic left- and right-tail masses of every mixture component. The resulting totals for $d=2,3,4$ differed from one by at most $7\times10^{-10}$. Full-precision values are archived in the public repository as \path{results/reproducibility_study/normalization.csv}.

The exact agreement in one dimension checks the plotting and normalization conventions. In higher dimensions, the generalized-Gamma markers track the central part and displayed right tail of the Monte Carlo mixture estimates. This comparison is descriptive: we do not fit the reference parameters to the samples used here, and we make no inferential claim based on their visual agreement.

\section*{Data and Code Availability}

The original simulation and plotting code, the independent reproducibility study, cached mixture curves, cell-level data, facet diagnostics, Sobol convergence results, normalization checks, and run summaries are publicly available at \url{https://github.com/shitian-0715/poisson-voronoi-volume}.

\FloatBarrier

\section{Conclusion}
\label{sec:conclusion}

Building on the classical Gamma and complementary-theorem results for the Poisson--Voronoi fundamental region, we have developed an explicit configuration-space factorization of the Palm-typical cell volume. Conditional on the number of effective facets, the flower-volume scale is Gamma-distributed and independent of the normalized effective-neighbour configuration. Mapping that configuration to the cell-to-flower ratio expresses the cell volume as the product of the classical Gamma scale and a bounded shape factor. This representation yields exact mixture, transform, and moment formulae, characterizes the conditional Gamma case, and separates within-stratum shape variability from mixing across facet-number strata.

The same framework reduces the critical negative moments governing the lower tail to inverse-volume integrals over normalized configuration spaces. Under the stated integrability and higher-facet summability assumptions, the density of \(Y_d=\lambda V_{d,\lambda}\) satisfies
\[
  f_{Y_d}(y)
  \sim
  \frac{p_{d,d+1}}{\Gamma(d+1)}
  \E[A_{d+1}^{-(d+1)}]\,y^d,
  \qquad y\downarrow0.
\]
The one-dimensional distribution is recovered exactly, explicit planar coordinates are obtained, and simulations in dimensions two through four illustrate the mixture representation. Establishing the required geometric integrability and summability conditions remains the principal step toward an unconditional lower-tail result.

\appendix

\section{Simulation certification and expansion diagnostics}
\label{app:certification-diagnostics}

The reproducibility simulation begins with radius
\[
  R_0=3.5(\lambda\kappa_d)^{-1/d}.
\]
If a stage is unsuccessful, the radius is updated according to
\[
  R\leftarrow1.25R,
\]
with at most six expansions. At each expansion, we retain the existing Poisson configuration and supplement it with points generated in the newly added shell. Thus, an unsuccessful intermediate stage does not represent a rejected realization.

A cell is certified when
\[
  r_{\max}<R/2,
\]
where \(r_{\max}\) is the maximum distance from the nucleus to a cell vertex. The condition is implemented conservatively as
\[
  r_{\max}<(R/2)(1-10^{-9}).
\]
After certification, the flower volume is evaluated using two independently scrambled Sobol rules, each containing \(2048=2^{11}\) directions, and the two estimates are averaged.

The run produced \(20\,000\), \(5000\), and \(5000\) certified cells in dimensions \(2\), \(3\), and \(4\), respectively. No realization was finally rejected or remained uncertified, and none exhausted the six-expansion limit. Table~\ref{tab:expansion-summary} reports the expansion histogram and the total number of unsuccessful intermediate stages.

\begin{table}[!t]
  \centering
  \caption{Certification and adaptive-radius expansion summary. The column ``0 expansions''
  records cells certified at the initial radius; ``failed stages'' counts unsuccessful
  intermediate stages before eventual certification.}
  \label{tab:expansion-summary}
  \begin{tabular}{crrrrrrr}
    \toprule
    \(d\) & certified
      & \multicolumn{5}{c}{number of expansions}
      & failed stages \\
    \cmidrule(lr){3-7}
      & & 0 & 1 & 2 & 3 & 4 & \\
    \midrule
    2 & \(20\,000\) & \(10\,799\) & \(6387\) & \(2473\) & \(328\) & \(13\) & \(12\,369\) \\
    3 & \(5000\)    & \(2765\)    & \(2166\) & \(69\)   & \(0\)   & \(0\)  & \(2304\) \\
    4 & \(5000\)    & \(3895\)    & \(1105\) & \(0\)    & \(0\)   & \(0\)  & \(1105\) \\
    \bottomrule
  \end{tabular}
\end{table}

In dimension \(2\), the \(12\,369\) unsuccessful stages comprised \(11\,819\) failed certification tests, \(548\) temporarily unbounded origin regions, and \(2\) Qhull failures. All unsuccessful intermediate stages in dimensions \(3\) and \(4\) were failed certification tests. We archive machine-readable summary records and the complete run configuration in the public repository as \path{results/reproducibility_study/run_summary.json}.

\fund This research received no specific grant from any funding agency, commercial or not-for-profit sector.

\competing The authors declare no competing interests.

\bibliographystyle{apt}
\bibliography{MyRef.bib}

\end{document}